\newtheorem{theorem}{Theorem}
\newtheorem{corollary}{Corollary}
\newtheorem{proposition}{Proposition}
\newtheorem{lemma}{Lemma}
\newtheorem{thmx}{Theorem}
\theoremstyle{remark}
\newtheorem{remark}{Remark}
\theoremstyle{definition}
\renewcommand{\Re}{\mathrm{Re} \,}
\renewcommand{\Im}{\mathrm{Im} \,}
\subjclass[2020]{Primary: 31B05, Secondary: 35J05, 35J15, 33C55, 33C50}
\title{Quadratic cones on which few harmonic functions vanish}
\author{Josef E. Greilhuber}
\date{\today}
\begin{document}

\begin{abstract}
    We show that, in dimension three and higher, the space of harmonic functions vanishing on the cone defined by a generically chosen harmonic quadratic polynomial is two-dimensional.
    This phenomenon is surprisingly robust, generalizing to arbitrary elliptic differential operators of second order, with the cone replaced by the level set of a solution at a nondegenerate critical value. As long as the tangent cone to the level set at the critical point satisfies a certain genericity condition, the space of solutions vanishing on the level set is at most two-dimensional.
\end{abstract}

\maketitle

\section{Introduction}

Given a subset $\mathcal C \subseteq \mathbb R^d$ and a point $p \in \mathcal C$, one may ask for the dimension of the space of (real valued) harmonic functions defined near $p$ which vanish on $\mathcal C$. Despite the apparent simplicity of this question, its answer seems not to be known in all but a few cases where the answer is either zero or infinite.

In two dimensions, it is well known and easy to see that this is always the case.\footnote{Any harmonic function $u$ on $\mathbb R^2$ is the imaginary part of a holomorphic function $h$. If $u$ vanishes on $\mathcal C$, $h$ is real valued there. Thus, $\{\Im h^n\}_{n=1}^\infty$ yields an infinite number of linearly independent harmonic functions which likewise vanish on $\mathcal C$, unless $u \equiv 0$.} 
For $d \geq 3$, the picture is less clear-cut. The dimension in question can still be infinite, e.g.\ if $\mathcal C$ is a real-analytic hypersurface \cite[Example 4.1]{LogunovMalinnikova2016}, a Coxeter system of hyperplanes through $p$ \cite[Lemma 4.4]{Agranovsky2000} or the cone $\{x^2+y^2+z^2-3w^2 = 0\}$ in $\mathbb R^4$ through $p=0$ \cite[Example 4.3]{LogunovMalinnikova2016}.

On the other hand, Agranovsky and Krasnov \cite[Section 6.1]{AgranovskyKrasnov2000} conjectured that there exists a quadratic cone on which a nonzero, but finite number of linearly independent harmonic functions vanish. In fact, Agranovsky \cite{Agranovsky2000} conjectured this is already the case for the cone $\{x^2+y^2-2z^2=0\}$ in $\mathbb R^3$, and that any harmonic function which vanishes on this cone is of the form $(x^2+y^2-2z^2)(A + Bxyz + Cz(x^2-y^2))$ for $A,B,C \in \mathbb R$. Mangoubi and Weller Weiser \cite{MangoubiWellerWeiser} provided strong evidence in support of this conjecture, but it remains open. The aim of this paper is to prove Agranovsky--Krasnov's conjecture is true for \emph{generic} cones.

\begin{comment}To simplify the following discussion, denote the space of germs at $p$ of harmonic functions vanishing on $\mathcal C$ by $\mathscr V_p(\mathcal C)$. The following list of examples is compiled from the references \cite{Agranovsky2000, AgranovskyKrasnov2000, LogunovMalinnikova2016}
\begin{enumerate}
    \item If $\mathcal C$ is a smooth analytic hypersurface near $p$, it follows immediately from the Cauchy--Kovalevskaya theorem that $\dim \mathscr V_p(\mathcal C) = \infty$.
    \item If $\mathcal C$ is a union of hyperplanes through $p$, Agranovsky \cite[Lemma 4.4]{Agranovsky2000} showed that $\dim \mathscr V_p(\mathcal C) = 0$ unless the group generated by reflections across the hyperplanes is finite, in which case $\dim \mathscr V_p(\mathcal C) = \infty$.
    \item For $\mathcal C = \{x \in \mathbb R^4: x_1^2 + x_2^2 + x_3^2 - 3x_4^2 = 0\}$, $\dim \mathscr V_0(\mathcal C) = \infty$ as well -- see \cite[Example]{LogunovMalinnikova2016}
\end{enumerate}
\end{comment}

Given $d$ nonzero real numbers $a_1,\ldots,a_d$, we consider the quadratic cone
\begin{align*}
    \mathcal C_a = \left\{ x \in \mathbb R^n: \frac{x_1^2}{a_1} + \ldots + \frac{x_d^2}{a_d} = 0 \right\}.
\end{align*}
Since we are interested in the set of harmonic functions vanishing on $\mathcal C_a$, it is natural to consider those $a$ with $\frac{1}{a_1} + \ldots + \frac{1}{a_d} = 0$. In this case, $\frac{x_1^2}{a_1} + \ldots + \frac{x_d^2}{a_d}$ is a nontrivial harmonic function vanishing on $\mathcal C_a$. It turns out that there is always another linearly independent harmonic function vanishing on $\mathcal C_a$, namely $x_1\ldots x_d(  \frac{x_1^2}{a_1} + \ldots + \frac{x_d^2}{a_d} )$. We will show that, in a certain sense, \emph{almost all} such cones admit no further harmonic functions vanishing on them. To state our result, let $\mathcal A^d = \{a \in (\mathbb R \setminus \{0\}) ^{d}: \frac{1}{a_1} + \ldots + \frac{1}{a_d} = 0\}$.

\begin{theorem}
\label{thm:main}
Let $d \geq 3$. There exists a countable intersection $\mathcal S$ of nonempty, Zariski-open subsets of $\mathcal A^d$ such that for all $a \in \mathcal S$, the following holds:

Any real valued harmonic function defined on a neighborhood of the origin and vanishing on the cone $\mathcal C_a$
must be of the form
\begin{align*}
    u(x) = \left( \frac{x_1^2}{a_1} + \ldots + \frac{x_d^2}{a_d} \right)\left(A + B\, x_1 \ldots x_d\right)
\end{align*}
for some constants $A,B \in \mathbb R$.
\end{theorem}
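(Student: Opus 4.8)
The plan is to reduce the statement to a sequence of finite-dimensional linear-algebra problems, one in each polynomial degree, and then close the argument by semicontinuity after checking the relevant non-degeneracy at a single good parameter per degree.

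\emph{Reduction to a transport equation.} For every $a \in \mathcal A^d$ the form $Q_a(x) = \sum_i x_i^2/a_i$ is nondegenerate and indefinite (the indefiniteness being forced by $\sum_i 1/a_i = 0$), hence — crucially using $d \ge 3$ — irreducible, so its real zero set is Zariski dense in the complex quadric $\{Q_a = 0\}$. A real-analytic $u$ vanishing on $\mathcal C_a$ therefore has Taylor series vanishing on that quadric, and since $Q_a$ is prime in the ring of convergent power series we may write $u = Q_a v$ for a real-analytic germ $v$ at the origin. Because $\Delta Q_a = 2\sum_i 1/a_i = 0$, Leibniz gives $0 = \Delta u = 4 X v + Q_a \Delta v$, where $X = \sum_i (x_i/a_i)\partial_i$; writing $b_i = 1/a_i$, the field $X = \sum_i b_i x_i \partial_i$ is diagonal on monomials, $X x^\alpha = (b \cdot \alpha)\, x^\alpha$. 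Expanding $v = \sum_{k\ge 0} v_k$ into homogeneous parts, the operator $L_k := 4X + Q_a \Delta$ preserves the space $\mathcal P_k$ of homogeneous polynomials of degree $k$, so $L_k v_k = 0$ for every $k$.

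\emph{What has to be proved.} The theorem thus amounts to the assertion that, for $a$ in a suitable set, $\ker(L_k|_{\mathcal P_k})$ equals $\mathbb R \cdot 1$ for $k = 0$, equals $\mathbb R \cdot x_1\cdots x_d$ for $k = d$, and is $\{0\}$ otherwise. The inclusions "$\supseteq$" hold for \emph{every} $a \in \mathcal A^d$: the polynomials $1$ and $x_1\cdots x_d$ are harmonic and $X$-null, the latter because $b\cdot(1,\dots,1)=\sum_i b_i = 0$. In fact, for generic $b$ the only $\alpha \ge 0$ with $b\cdot\alpha = 0$ are multiples of $(1,\dots,1)$, of which only $1$ and $x_1\cdots x_d$ are harmonic — so the harmonic contribution to every $\ker L_k$ is already pinned down, and the real task is to rule out \emph{non-harmonic} solutions generically.

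\emph{Semicontinuity.} In the $b$-coordinates the matrix of $L_k|_{\mathcal P_k}$ has polynomial entries, so $\dim \ker L_k$ is upper semicontinuous in the Zariski topology. Now $\mathcal A^d$ is the real locus of the irreducible affine variety cut out in $(\mathbb C^{\times})^d$ by the $(d{-}1)$st elementary symmetric polynomial $e_{d-1}$ (which is irreducible), and this variety has Zariski-dense real points; hence for each $k$ the set $\mathcal S_k$ on which $\dim\ker L_k$ takes its minimal (generic) value is a nonempty Zariski-open subset of $\mathcal A^d$, and $\mathcal S := \bigcap_k \mathcal S_k$ is the required countable intersection. Using the lower bounds just noted ($\dim\ker L_0 = 1$ automatically, $\dim\ker L_d \ge 1$ always), everything reduces to the two non-degeneracy statements: for each $k \notin \{0,d\}$ there is some $a$ (over $\mathbb C$ suffices) with $L_k|_{\mathcal P_k}$ injective, and there is some $a$ with $\dim\ker L_d = 1$.

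\emph{The non-degeneracy — the main obstacle.} This last step is the heart of the argument and is genuinely delicate, because essentially every natural specialization of $a$ is bad: letting some $b_i \to 0$ degenerates $\mathcal C_a$ to a cylinder over a lower-dimensional cone, while letting two of the $b_i$ have equal magnitude and opposite sign creates a finite-reflection-group configuration — in both cases infinitely many solutions appear, so one cannot simply plug in a special value. I would try two routes. The operator-theoretic one writes $L_k = D + N$, with $D$ diagonal (its $\beta$-entry is $\sum_i b_i\beta_i(\beta_i+3)$, nonzero for generic $b$ unless $\beta$ is a multiple of $(1,\dots,1)$) and $N$ the off-diagonal part, which transports two units of exponent between coordinates and preserves the decomposition of $\mathcal P_k$ into residue classes $\bmod\,2$; one then seeks a filtration of each class rendering $L_k$ block-triangular with invertible diagonal blocks, or runs a one-parameter degeneration inside $\mathcal A^d$ that spreads the eigenvalues of $X$ while controlling $N$. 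The alternative is a symmetric witness such as $b = (1,\zeta,\dots,\zeta^{d-1})$ with $\zeta = e^{2\pi i/d}$, a point of the complexified $\mathcal A^d$ on which cyclic permutation of the coordinates acts and multiplies $Q_b$ by $\zeta^{-1}$; decomposing $\mathcal P_k$ under this $\mathbb Z/d$-symmetry should split $L_k$ into small blocks whose invertibility is accessible through explicit Gegenbauer/Jacobi-type identities. Granting this — together with the direct check that the only harmonic degree-$d$ solution is $x_1\cdots x_d$ and that some witness carries no non-harmonic one, so $\dim\ker L_d = 1$ — it follows that for $a \in \mathcal S$ one has $v = v_0 + v_d = A + B x_1\cdots x_d$, hence $u = Q_a(A + B x_1\cdots x_d)$, as claimed.
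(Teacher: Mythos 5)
Your reduction and semicontinuity steps are sound and essentially coincide with the paper's \Cref{sec:preliminaries}: divisibility of $u$ by $Q_a$, the observation that $\dim\mathscr V_N(\mathcal C_a)$ is the corank of $r\mapsto\Delta(Q_a r)$ on homogeneous polynomials (your $L_k$ is exactly this map), semicontinuity of the corank via minors, and the identification of the harmonic elements of $\ker L_k$ with $1$ and $x_1\cdots x_d$. But the theorem is not proved, because the step you yourself flag as ``the main obstacle'' --- exhibiting, for every $k\notin\{0,d\}$, a single parameter $a$ at which $L_k|_{\mathcal P_k}$ is injective, and one with $\dim\ker L_d=1$ --- is exactly the content of the theorem, and neither of your two proposed routes is carried out or even shown to be viable. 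Your instinct that every natural specialization of $a$ is degenerate is correct, and it is precisely why no soft argument (block-triangularization of $D+N$, or a roots-of-unity witness with a $\mathbb Z/d$-splitting) is known to close the gap; you give no filtration, no invertibility computation for the blocks, and no identity that would make the symmetric witness work.

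For comparison, the paper's resolution of this step occupies \Cref{sec:niven_equations,sec:proof,sec:WKB} and is quantitative rather than structural. It converts $\dim\mathscr V_N(\mathcal C_a)$ into a count of solutions of Niven's equations having a vanishing component (\Cref{thm:AgranovskyKrasnov2000}), shows these solutions $\xi^a_{\epsilon,m}$ depend analytically on $a$ and extend across degenerations (\Cref{lem:analytic_continuation}), and then --- instead of avoiding the bad limits --- runs the explicit degeneration $a(t)=(-1,\tfrac2{1+t},\tfrac2{1-t})$ in $d=3$ all the way to both singular endpoints. The hypothesis that a component vanishes identically forces \emph{two} conditions at $t=1$ (value and derivative), which after an exact linear-algebra computation (\Cref{lem:miracle}) yields a Pell-type Diophantine constraint on $(m,\epsilon)$; the surviving candidates are then eliminated at $t=0$ by locating a zero of an associated Legendre polynomial to within an explicit WKB error bound and showing it misses $1/\sqrt3$. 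Higher dimensions follow by induction. None of this machinery --- the Niven correspondence, the first-order endpoint analysis, the Diophantine reduction, or the uniform Legendre asymptotics --- appears in your proposal, so what you have is a correct framing of the problem plus the routine generic-parameter bookkeeping, with the actual proof missing.
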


\begin{remark}
    In \Cref{thm:main}, \emph{Zariski-open} means open in the subset topology on $\mathcal A^d$ induced by the Zariski topology on $\mathbb R^d$. A countable intersection of non-empty, Zariski-open subsets is generic in a very strong sense. In particular, the set $\mathcal S$ is both of full Lebesgue measure and comeagre in $\mathcal A^d$.
\end{remark}

\begin{remark}
    Since Laplace eigenfunctions on the unit sphere $\mathbb S^d$ are in one-to-one correspondence with homogeneous harmonic polynomials, \Cref{thm:main} immediately implies the assertion that for all $a \in \mathcal S$, exactly two eigenfunctions vanish on the spherical ellipsoid $\mathcal C_a \cap \mathbb S^d$.

    This should be compared with a result of Bourgain and Rudnick, who showed that a hypersurface in $\mathbb T^d = \mathbb R^d / \mathbb Z^d$ with nowhere vanishing Gauss-Kronecker curvature is contained in the nodal set of at most finitely many eigenfunctions \cite[Theorem 1.2]{BourgainRudnick2012}. The corresponding problem on spheres seems more intricate, as is indicated by the fact that on $\mathbb S^3$ infinitely many eigenfunctions vanish on any ``sphere of latitude'' whose angle to the equator is a rational multiple of $\pi$ (this follows from \cite[Example 4.3]{LogunovMalinnikova2016}).
\end{remark}

If one replaces the Laplace operator by a general elliptic differential operator of second order with smooth coefficients, the ``good'' cones from \Cref{thm:main} still play an important role:

\begin{theorem}
    \label{thm:general}
    Let $L = \sum_{j,k=1}^d a_{jk}(x) \frac{\partial^2}{\partial x_j \partial x_k}+ \sum_{j=1}^d b_{j}(x) \frac{\partial}{\partial x_j} + c(x)$ be an elliptic partial differential operator with smooth coefficients. Let $u$ be a germ at $0$ of a solution to the equation $Lu = 0$ for which $u(0) = 0$ and $\nabla u(0) = 0$, but $\mathrm{Hess}_u(0)$ is nondegenerate.

    Let $\lambda_1\leq\ldots\leq\lambda_d$ denote the eigenvalues of the bilinear form $\mathrm{Hess}_u(0)$ with respect to the bilinear form $a(0)$. If $\left(\frac1\lambda_1,\ldots,\frac1\lambda_d\right)$ is contained in the set $\mathcal S$ introduced in \Cref{thm:main}, then the space of germs at $0$ of solutions to $L$ which vanish on $u^{-1}(\{0\})$ is at most two-dimensional.
\end{theorem}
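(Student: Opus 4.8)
The plan is to deduce \Cref{thm:general} from the upper bound contained in \Cref{thm:main} by straightening the level set and then comparing leading-order Taylor polynomials. First I would normalize coordinates: since the eigenvalues of $\mathrm{Hess}_u(0)$ relative to $a(0)$ are unaffected by linear changes of variables, an affine change of coordinates centered at $0$ makes the principal part of $L$ at the origin equal to $\Delta$ while simultaneously turning the quadratic part of $u$ at $0$ into $\tfrac12\sum_i\lambda_i x_i^2$. Evaluating $Lu=0$ at the origin (where $u$ and $\nabla u$ vanish) gives $\sum_i\lambda_i=0$, which is exactly the defining relation of $\mathcal A^d$ for $a:=(1/\lambda_1,\dots,1/\lambda_d)$ and is precisely what makes $Q(x):=\sum_i\lambda_i x_i^2=\sum_i x_i^2/a_i$ a harmonic polynomial. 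Since $0$ remains a nondegenerate critical point of $u$, the smooth Morse lemma supplies a local diffeomorphism $\psi$ with $\psi(0)=0$ and $D\psi(0)=\mathrm{Id}$ such that $u\circ\psi=\tfrac12 Q$. Pulling $L$ and all its solutions back by $\psi$ yields an elliptic operator $\widetilde L$ with smooth coefficients whose principal part at $0$ is still $\Delta$ (because $D\psi(0)=\mathrm{Id}$) and for which the level set $u^{-1}(\{0\})$ has become exactly the cone $\mathcal C_a$, with $a\in\mathcal S$ by hypothesis. It thus suffices to show $\dim\mathcal V\le2$, where $\mathcal V$ is the space of germs at $0$ of solutions of $\widetilde L$ vanishing on $\mathcal C_a$.

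To each nonzero $v\in\mathcal V$ I would attach its lowest-degree nonvanishing homogeneous Taylor polynomial $P_v$, of degree $m_v$; this is well defined because unique continuation for second-order elliptic operators forbids a nonzero solution from vanishing to infinite order at $0$. Two properties of $P_v$ drive the argument. First, $P_v$ is harmonic: in the identity $\widetilde Lv=0$ the lowest-order term, of degree $m_v-2$, equals $\Delta P_v$ (here one uses that the principal part of $\widetilde L$ at $0$ is $\Delta$), and since it is balanced by terms of strictly higher order it must vanish, so $\Delta P_v=0$. Second, $P_v$ vanishes on $\mathcal C_a$: because $\mathcal C_a$ is a cone, for every $\xi\in\mathcal C_a$ the function $t\mapsto v(t\xi)$ vanishes identically for $t$ near $0$, and differentiating $m_v$ times at $t=0$ yields $P_v(\xi)=0$. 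The same computation applied to the linear Taylor part of $v$ shows it is a linear form vanishing on $\mathcal C_a$, hence identically zero since $\mathcal C_a$ spans $\mathbb R^d$ (a nondegenerate quadric cone lies in no hyperplane); thus $m_v\ge2$, and $P_v$ is a nonzero homogeneous harmonic polynomial of degree at least $2$ vanishing on $\mathcal C_a$.

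Now \Cref{thm:main} enters. For $a\in\mathcal S$ every homogeneous harmonic polynomial vanishing on $\mathcal C_a$ is of the form $Q(A+Bx_1\cdots x_d)$ and hence, being homogeneous, is a scalar multiple either of $Q$ (degree $2$) or of $Qx_1\cdots x_d$ (degree $d+2$), with no such polynomial existing in any other degree. I would then filter $\mathcal V$ by order of vanishing, $\mathcal V_{\ge m}:=\{v\in\mathcal V:v=0\text{ or }m_v\ge m\}$, and note that $v\mapsto P_v$ induces an injection of $\mathcal V_{\ge m}/\mathcal V_{\ge m+1}$ into the space of homogeneous harmonic polynomials of degree $m$ vanishing on $\mathcal C_a$. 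By the previous sentence this quotient is zero unless $m\in\{2,d+2\}$, in which case it has dimension at most $1$; moreover $\mathcal V_{\ge2}=\mathcal V$ and $\mathcal V_{\ge d+3}=0$. Since the quotients in degrees $3,\dots,d+1$ vanish we have $\mathcal V_{\ge3}=\mathcal V_{\ge d+2}$, whence
\[
\dim\mathcal V=\dim\bigl(\mathcal V_{\ge2}/\mathcal V_{\ge3}\bigr)+\dim\bigl(\mathcal V_{\ge d+2}/\mathcal V_{\ge d+3}\bigr)\le1+1=2 .
\]

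I expect the main obstacle to lie in making the reduction of the first paragraph fully rigorous---not deep, but requiring some care. One must verify that the affine normalization and the Morse chart can be arranged compatibly, so that the resulting operator $\widetilde L$ is genuinely a second-order elliptic operator with smooth coefficients and has principal part equal to $\Delta$ at the origin; and one must read ``the eigenvalues of $\mathrm{Hess}_u(0)$ with respect to $a(0)$'' in its coordinate-invariant sense, namely as the eigenvalues of $a(0)\,\mathrm{Hess}_u(0)$, so that the straightened level set really is $\mathcal C_a$ for the very same $a$ appearing in the hypothesis. Once this reduction is in place the rest is soft: assigning to a germ of a solution of $\widetilde L$ its leading harmonic polynomial converts the problem into the elementary linear-algebra bookkeeping of the third paragraph, the only analytic inputs beyond \Cref{thm:main} being the smooth Morse lemma and unique continuation from a point.
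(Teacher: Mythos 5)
Your proof is correct, and while the normalization step (simultaneous diagonalization, $\sum_i\lambda_i=0$ from evaluating $Lu=0$ at the origin, Morse lemma with $D\psi(0)=\mathrm{Id}$) coincides with the paper's, the core of your argument is genuinely different. The paper first invokes Mather's division theorem to write any solution $v$ vanishing on $u^{-1}(\{0\})$ as $v=qu$ with $q$ a smooth germ, and then runs a recursion on the homogeneous Taylor components of $q$: the map $q_\ell\mapsto\Delta(uq_\ell)$ is invertible for $\ell\notin\{0,d\}$ (this is the corank characterization of $\dim\mathscr V_{\ell+2}(\mathcal C_a)$ from \Cref{sec:preliminaries}, combined with \Cref{thm:main}), so the formal power series of $q$ is determined by $q_0$ and the $x_1\cdots x_d$-component of $q_d$. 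You instead bypass the division theorem entirely: you attach to each solution its lowest nonvanishing Taylor polynomial, observe it is harmonic (freezing the coefficients at $0$) and vanishes on the cone (since the cone is dilation-invariant), and then apply \Cref{thm:main} degree by degree through the filtration by vanishing order. Both arguments close with Aronszajn's strong unique continuation to pass from Taylor data to germs, and both draw the same input from \Cref{thm:main}. Your route is more elementary and arguably cleaner as a pure dimension bound; the paper's route yields a little more, namely an explicit description of the (at most) two-dimensional space of formal power series solutions as spanned by $u$ and the solution of the recursion \eqref{eq:recursion} seeded by $q_d=x_1\cdots x_d$, which makes transparent where the second solution could fail to be realized by an actual germ.
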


The structure of this paper is as follows: \Cref{sec:preliminaries} reduces the problem to one about homogeneous harmonic polynomials through  a series of simple and well-known observations. \Cref{sec:niven_equations} contains a discussion of the geometric picture behind the proof of \Cref{thm:main}, and introduces the main tool used to solve this problem, a theorem by Agranovsky and Krasnov relating quadratic divisors of homogeneous harmonic polynomials to the so-called \emph{Niven equations} \cite[Theorem 1.2]{AgranovskyKrasnov2000}. \Cref{sec:proof} establishes \Cref{thm:main} by analyzing the parametric behaviour of solutions to Niven's equations. The proof uses an approximation formula for associated Legendre polynomials due to Landauer \cite{Landauer1951}. \Cref{sec:WKB} is dedicated to establishing the necessary error bounds for this approximation, which turn out to be quite convenient. \Cref{sec:smooth} contains the proof of \Cref{thm:general}.
%\textcolor{red}{Write good introduction.}

\section{Preliminary observations}
\label{sec:preliminaries}

Given a set $\mathcal C \subseteq \mathbb R^d$, let $\mathscr V(\mathcal C)$ denote the space of harmonic polynomials vanishing on $\mathcal C$, and for $N \geq 0$, let $\mathscr V_N(\mathcal C)$ denote the subspace of $\mathscr V(\mathcal C)$ consisting of harmonic homogeneous polynomials of degree $N$.

The following observation, due to D.H. Armitage \cite[Lemma 1]{Armitage1992}, is elementary, but extremely useful: If $\mathcal C$ is a \emph{cone}, i.e. $t \, \mathcal C = \mathcal C$ for all $t > 0$, then $\dim \mathscr V(\mathcal C) = \sum_{N=0}^\infty \dim \mathscr V_N(\mathcal C)$. Of course, $\dim \mathscr V(\mathcal C) \geq \sum_{N=0}^\infty \dim \mathscr V_N(\mathcal C)$. On the other hand, we can write $u \in \mathscr V(\mathcal C)$ as a sum of harmonic homogeneous polynomials, $u = \sum_{N=0}^\infty u_N$. An inductive argument, starting at the first nonvanishing term of this sum, then shows $u_N \in \mathscr V_N(\mathcal C)$ for all $N \geq 0$.

One can furthermore consider the spaces $\mathscr V^\infty(\mathcal C)$, $\mathscr V^R(\mathcal C)$ and $\mathscr V^0(\mathcal C)$ of entire harmonic functions, harmonic functions defined on $\mathbb B_R(0)$, and germs of harmonic functions at $0$, respectively, which vanish on $\mathcal C$ whenever they are defined. Clearly, $\mathscr V(\mathcal C)$ injects into all these spaces, and these maps are surjective if and only if $\dim \mathscr V(\mathcal C) < \infty$.

Restricting our attention to the quadratic cones $\mathcal C_a$ defined above, we observe that $\mathscr V_2(\mathcal C_a) \geq 1$ and $\mathscr V_{d+2}(\mathcal C_a) \geq 1$, since the homogeneous harmonic polynomials
$\frac{x_1^2}{a_1} + \ldots + \frac{x_d^2}{a_d}$ and $x_1\ldots x_n(  \frac{x_1^2}{a_1} + \ldots + \frac{x_d^2}{a_d} )$ always vanish on $\mathcal C_a$. \Cref{thm:main} will follow once we establish that for each $N \in \mathbb N$, $\mathscr V_{N}(\mathcal C_a)$ attains its minimum on a Zariski open subset of $\mathcal A^d$, and that $\min_{a \in \mathcal A^d} \mathscr V_{N}(\mathcal C_a) = 0$ unless $N \in \{2,2+d\}$, where $\min_{a \in \mathcal A^d} \mathscr V_{N}(\mathcal C_a) = 1$.

As noted by Logunov--Malinnikova \cite[Lemma 2.1]{LogunovMalinnikova2016} (synthesizing Theorem 2 and Lemma 4 of \cite{Murdoch1964}), any harmonic polynomial vanishing on the zero set of a given homogeneous harmonic polynomial is in fact divisible by the latter. A straightforward generalization of \cite[Proposition 2.2]{AgranovskyKrasnov2000} then shows that sublevel sets of $\dim \mathscr V_{N}(\mathcal C_a)$ are Zariski-open subsets of $\mathcal A^d$, establishing the first of the two claims. Indeed, since $\dim \mathscr V_{N}(\mathcal C_a)$ is the corank of the linear map $r(x) \mapsto \Delta \left(\left(\frac{x_1^2}{a_1}+ \ldots + \frac{x_d^2}{a_d}\right) r(x)\right)$ on the space of homogeneous polynomials of order $N-2$, its superlevel sets are determined by the vanishing of certain minors of the corresponding matrix, i.e.\ by polynomial equations in $\frac{1}{a_1},\ldots,\frac{1}{a_d}$. Multiplying through by the common denominator shows the superlevel sets are Zariski closed in $\mathcal A^d$.

\section{Conical coordinates and Niven's equations}
\label{sec:niven_equations}

The proper starting point of our proof is the characterization of quadratic divisors of harmonic polynomials, as obtained by Agranovsky and Krasnov in \cite{AgranovskyKrasnov2000}. We will need the following, slightly modified, version of \cite[Theorem 1.2]{AgranovskyKrasnov2000}.

\begin{thmx}
    \label{thm:AgranovskyKrasnov2000}
    Consider the quadratic cone $\mathcal C_a$ in $\mathbb R^d$ defined by
    \begin{align*}
        \frac{x_1^2}{a_1} + \frac{x_2^2}{a_2} + \ldots + \frac{x_d^2}{a_d} = 0,
    \end{align*}
    where $a_1 < a_2 < \ldots < a_d$ are distinct, nonzero real numbers.
    
    Suppose a homogeneous harmonic polynomial of degree $N$ vanishes on $\mathcal C_a$. Then there exist $n \in \mathbb N$ and $\epsilon \in \{0,1\}^d$ with $2n+|\epsilon| = N$ such that
    \begin{align}
    \label{eq:Niven}
        &\sum_{j=1}^d \frac{1+2\epsilon_j}{\xi_k-a_j} + \sum_{\ell\neq k} \frac{4}{\xi_k-\xi_\ell} = 0, &k=1,\ldots,n
    \end{align}
    has a solution $(\xi_k)_{k=1}^n$ with $\xi_k = 0$ for some $k \in \{1,\ldots,n\}$. 
\end{thmx}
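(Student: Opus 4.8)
The plan is to read \Cref{thm:AgranovskyKrasnov2000} off from the classical theory of Niven (sphero-conal) harmonics in its $d$-dimensional form, while keeping track of which separation parameter is the one pinned to $0$. First I would introduce the \emph{conical coordinates} $\rho_1<\cdots<\rho_{d-1}$ on $\mathbb R^d\setminus\{0\}$, where $\rho_i(x)$ is the unique root in $(a_i,a_{i+1})$ of $\xi\mapsto\sum_{j=1}^d x_j^2/(\xi-a_j)$ (this function decreases strictly from $+\infty$ to $-\infty$ there, for $x$ with all coordinates nonzero); together with $|x|$ and modulo the reflections $x_j\mapsto -x_j$, these parametrize a dense open subset of $\mathbb R^d\setminus\{0\}$. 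Comparing poles and the behaviour as $\xi\to\infty$ gives the identity
\begin{align*}
    \sum_{j=1}^d\frac{x_j^2}{\xi-a_j}=\frac{|x|^2\,\prod_{i=1}^{d-1}(\xi-\rho_i(x))}{\prod_{j=1}^d(\xi-a_j)},
\end{align*}
and setting $\xi=0$ exhibits the defining polynomial $q(x):=\frac{x_1^2}{a_1}+\cdots+\frac{x_d^2}{a_d}$ of $\mathcal C_a$ as a nonzero constant multiple of $|x|^2\prod_i\rho_i(x)$. We may assume the $a_j$ are of mixed sign (otherwise $\mathcal C_a=\{0\}$), so that there is a unique index $m$ with $a_m<0<a_{m+1}$; since $\rho_m$ is then the only conical coordinate whose range contains $0$, it follows that $\mathcal C_a$ is the coordinate cone $\{\rho_m=0\}$. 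More generally, for $t\notin\{a_1,\dots,a_d\}$ the diagonal quadratic form $\mathsf q_t(x):=\sum_j x_j^2\prod_{\ell\neq j}(t-a_\ell)$ equals a nonzero constant times $|x|^2\prod_i(t-\rho_i(x))$, and in particular $\mathsf q_0$ is a nonzero multiple of $q$.

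Next I would invoke the theory of sphero-conal harmonics in the $d$-dimensional form set up by Agranovsky and Krasnov \cite[\S 2]{AgranovskyKrasnov2000} (after the three-dimensional results of Niven and Hobson): for each $\epsilon\in\{0,1\}^d$ and $n$ with $2n+|\epsilon|=N$ and each solution $\xi=(\xi_k)_{k=1}^n$ of the Niven system \eqref{eq:Niven} with distinct coordinates avoiding $\{a_1,\dots,a_d\}$, the polynomial
\begin{align*}
    P_{\epsilon,\xi}(x)=\Bigl(\prod_{j\,:\,\epsilon_j=1}x_j\Bigr)\prod_{k=1}^n\mathsf q_{\xi_k}(x)
\end{align*}
is harmonic; conversely, harmonicity of a product of this shape is equivalent to \eqref{eq:Niven} (by a direct computation of $\Delta P_{\epsilon,\xi}$ in conical coordinates, equivalently by Stieltjes' electrostatic argument), and, as $(\epsilon,\xi)$ ranges over all such data, the $P_{\epsilon,\xi}$ form a basis of the space $\mathscr H_N$ of homogeneous harmonic polynomials of degree $N$. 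Combining this with the dictionary above: since $q=\mathsf q_0$ and each $\mathsf q_{\xi_k}$ is an irreducible, nondegenerate quadratic form (this is where $d\geq 3$ enters), unique factorization in $\mathbb R[x_1,\dots,x_d]$ shows that $P_{\epsilon,\xi}$ is divisible by $q$ precisely when $\xi_k=0$ for some $k$, which by the divisibility lemma of Murdoch and Logunov--Malinnikova recalled in \Cref{sec:preliminaries} is equivalent to $P_{\epsilon,\xi}$ vanishing on $\mathcal C_a$.

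The theorem now reduces to showing that $\mathscr V_N(\mathcal C_a)$ is spanned by those basis polynomials $P_{\epsilon,\xi}$ which vanish on $\mathcal C_a$: given this, the hypothesis $\mathscr V_N(\mathcal C_a)\neq 0$ supplies at least one such $P_{\epsilon,\xi}$, and its data $(\epsilon,\xi)$ is exactly a solution of \eqref{eq:Niven} with a vanishing coordinate. To establish the spanning statement I would write a degree-$N$ harmonic polynomial as $|x|^N\Phi$, with $\Phi$ a function on the box $\prod_i(a_i,a_{i+1})$ (modulo reflections), so that membership in $\mathscr V_N(\mathcal C_a)$ becomes $\Phi|_{\rho_m=0}\equiv 0$; separating variables, $P_{\epsilon,\xi}=|x|^N\prod_i\phi^{(i)}_{\epsilon,\xi}(\rho_i)$, where for $i\neq m$ the factor $\phi^{(i)}_{\epsilon,\xi}$ solves a Lamé-type equation on $(a_i,a_{i+1})$ whose coefficients are fixed by $N$, $\epsilon$, and the $d-2$ separation constants attached to $P_{\epsilon,\xi}$. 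The point is that distinct basis polynomials carry distinct data $(\phi^{(i)}_{\epsilon,\xi})_{i\neq m}$ — the remaining equation, in the variable $\rho_m$, is then a determined second-order equation with at most a one-dimensional space of solutions satisfying the correct boundary behaviour at $\rho_m=a_m$ and $\rho_m=a_{m+1}$, which pins $\phi^{(m)}_{\epsilon,\xi}$ up to a scalar — and, because these factors are eigenfunctions of Sturm--Liouville type with distinct spectral data, the products $\prod_{i\neq m}\phi^{(i)}_{\epsilon,\xi}$ are linearly independent. Hence if $P=\sum c_{\epsilon,\xi}P_{\epsilon,\xi}$ lies in $\mathscr V_N(\mathcal C_a)$, the relation $\sum c_{\epsilon,\xi}\,\phi^{(m)}_{\epsilon,\xi}(0)\prod_{i\neq m}\phi^{(i)}_{\epsilon,\xi}(\rho_i)\equiv 0$ forces $c_{\epsilon,\xi}\,\phi^{(m)}_{\epsilon,\xi}(0)=0$ for every $(\epsilon,\xi)$, so every $P_{\epsilon,\xi}$ occurring in $P$ already vanishes at $\rho_m=0$.

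I expect the rigidity used in the last step — the absence of cancellation along $\{\rho_m=0\}$, which ultimately rests on the (multiparameter) Sturm--Liouville/spectral theory underpinning the sphero-conal separation — to be the main obstacle, and it is precisely the portion of the argument to be imported from the proof of \cite[Theorem 1.2]{AgranovskyKrasnov2000}. Relative to that reference the modification is minor and is entirely accounted for by the first paragraph: where Agranovsky and Krasnov phrase divisibility by an arbitrary quadratic harmonic polynomial, we specialize to $q=\mathsf q_0$ and record, via the identification $\mathcal C_a=\{\rho_m=0\}$, that the divisibility is always witnessed by one of the Niven parameters $\xi_k$ being equal to $0$.
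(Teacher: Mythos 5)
Your set-up (conical coordinates, identification of $\mathcal C_a$ with the coordinate cone $\{\rho_m=0\}$, the Niven basis $P_{\epsilon,\xi}$ of $\mathscr H_N$, and the dictionary ``some $\xi_k=0$ $\Leftrightarrow$ divisibility by $q$ $\Leftrightarrow$ vanishing on $\mathcal C_a$'') is exactly the framework the paper uses. But the argument has a genuine gap at precisely the point you flag yourself: the linear independence of the truncated products $\prod_{i\neq m}\phi^{(i)}_{\epsilon,\xi}(\rho_i)$ obtained by deleting the $\rho_m$-factor. Your justification --- ``eigenfunctions of Sturm--Liouville type with distinct spectral data'' --- does not prove this. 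The known rigidity of the sphero-conal separation (Volkmer's orthogonality) concerns the \emph{full} products over all $d-1$ conical variables, integrated over the sphere; once one factor is removed and evaluated at a point, that inner-product structure is gone, and nothing in multiparameter Sturm--Liouville theory rules out a cancellation $\sum c_{\epsilon,\xi}\,\phi^{(m)}_{\epsilon,\xi}(0)\prod_{i\neq m}\phi^{(i)}_{\epsilon,\xi}\equiv 0$ with nonzero coefficients. Deferring this to ``the portion of the argument to be imported from \cite[Theorem 1.2]{AgranovskyKrasnov2000}'' leaves the theorem unproved, since this absence of cancellation is the entire content of the statement beyond Niven's classical computation.

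The paper closes exactly this hole with a short, self-contained trick you did not find: first decompose $\mathscr V_N(\mathcal C_a)$ into $\epsilon$-odd components (using the reflection symmetries of the cone), so that the expansion runs over a fixed $\epsilon$ --- a step your sketch skips, and which also disposes of the sign ambiguities in the factors $x_j^{\epsilon_j}$; then, given a relation $\sum_m \nu_m \prod_{j\neq\sigma}P_{\epsilon,m}(s_j)=0$, divide each term by $P_{\epsilon,m}(t)$ for a parameter $t>a_d$, reassemble the corresponding harmonic polynomial $\sum_m \nu_m C_{\epsilon,m}^{-1}P_{\epsilon,m}(t)^{-1}Q_{\epsilon,m}$, observe (by symmetry in $s_1,\ldots,s_{d-1}$) that it vanishes on all hypersurfaces $\{s_j=t\}$ and hence is divisible by the \emph{positive definite} quadratic form $\sum_j x_j^2/(t-a_j)$, and invoke Brelot--Choquet (no nonnegative polynomial divides a nonzero harmonic polynomial) to force the combination to vanish, whence $\nu_m=0$. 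If you want to complete your proposal, it is this argument (or an equally concrete substitute) that must replace the appeal to spectral ``rigidity''; as a smaller point, the divisibility lemma you cite from \Cref{sec:preliminaries} is stated for harmonic divisors, whereas in \Cref{thm:AgranovskyKrasnov2000} the form $q$ need not be harmonic, so the vanishing-implies-divisibility step should instead be justified by irreducibility of the indefinite quadric (which your unique-factorization remark already essentially contains).
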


In fact, $\dim \mathscr V_N(\mathcal C_a)$ is given by the number of solutions to (\ref{eq:Niven}) with some $\xi_k = 0$, as $n$ and $\epsilon$ range over all combinations where $2n+|\epsilon| = N$.

This slight extension of \cite[Theorem 1.2]{AgranovskyKrasnov2000} shifts the attention from classifying quadratic divisors of harmonic polynomials to the harmonic polynomials themselves, and follows directly from the arguments in \cite{AgranovskyKrasnov2000}. Both to fully justify this reformulation and to provide the geometric picture behind the rather technical calculations of \Cref{sec:proof}, we will provide a short discussion of the setup, culminating in an alternative argument for \Cref{thm:AgranovskyKrasnov2000}.

\subsection{Niven's equations}

It was observed by W.\ D.\ Niven in 1891 \cite{Niven1891} that the product of linear factors and confocal quadratic forms
\begin{align}
    \label{eq:conicHarmonics}
    Q(x) = x_1^{\epsilon_1}x_2^{\epsilon_2}\ldots x_d^{\epsilon_d} \prod_{k=1}^n \left(\frac{x_1^2}{\xi_k - a_1} + \frac{x_2^2}{\xi_k - a_2} + \ldots + \frac{x_d^2}{\xi_k - a_d} \right)
\end{align}
is harmonic if and only if $(\xi_k)_{k=1}^n$ satisfies (\ref{eq:Niven}), where $a_1<a_2<\ldots<a_d$ are real numbers, $n \in \mathbb N$ and $(\epsilon_j)_{j=1}^d \in \{0,1\}^d$.

Indeed, if we denote $K_\xi(x) = \frac{x_1^2}{\xi - a_1} + \frac{x_2^2}{\xi - a_2} + \ldots + \frac{x_d^2}{\xi - a_d}$, an application of the product rule and a partial fraction decomposition show that
\begin{align}
    \label{eq:deltaQ}
    \Delta Q(x) = Q(x) \sum_{k=1}^n \left(\sum_{j=1}^d \frac{1+2\epsilon_j}{\xi_k-a_j} + \sum_{\ell\neq k} \frac{4}{\xi_k-\xi_\ell}\right)\frac{1}{K_{\xi_k}(x)}.
\end{align}
The rational functions $\frac{1}{K_{\xi_k}}$ attain their singularities on distinct quad\-rics, and so are linearly independent. Therefore, $\Delta Q(x) = 0$ if and only if the coefficients of $\frac{1}{K_{\xi_k}}$ in (\ref{eq:deltaQ}) vanish for all $k \in \{1,\ldots,n\}$, i.e. if (\ref{eq:Niven}) is satisfied.

\subsubsection{Number of solutions and orthogonality}
\label{subsec:numberOfSolutions}

The solutions to (\ref{eq:Niven}) are precisely the critical points of the function 
\begin{align*}
%\label{eq:potential}
& \varphi(\xi) := - \sum_{k=1}^n \sum_{j=1}^d (1+2\epsilon_j)\log|\xi_k-a_j| - 4 \sum_{k=1}^n\sum_{\ell\neq k} \log|\xi_k-\xi_\ell|.
\end{align*}
This function is defined on the complement of the hyperplanes $\{\xi_k = \xi_\ell\}$, $k \neq \ell$, and $\{\xi_k = a_j\}$, $1 \leq k \leq n$, $1 \leq j \leq d$, which cut $\mathbb R^n$ into a finite number of convex regions, on each of which $\varphi$ is convex. Thus, $\varphi$ attains at most one critical point on each of these regions --- namely it's minimum, if it is achieved. Let $m \in \mathbb N^{d-1}$ be a multi-index with $|m| = m_1 + \ldots + m_{d-1} = n$. To it we associate the domain $D_m \subseteq \mathbb R^n$ defined by the linear constraints
\begin{align*}
 a_j < \xi_{k} < a_{j+1}&, && 1 \leq j < d, \ \textstyle{\sum_{i=1}^{j-1}m_i < k \leq \sum_{i=1}^{j}m_i}, \\
 \xi_{k'} < \xi_{k}&, && 1\leq k' < k \leq n.
\end{align*}
On this domain, $\varphi$ attains its minimum, since it approaches $+\infty$ as $\xi$ approaches $\partial D_m$. Up to permutation of the entries of $\xi$, all bounded components of the domain of definition of $\varphi$ arise in this way. On the unbounded components, which are characterized by having at least one entry of $\xi$ outside the interval $[a_1,a_d]$, the function $\varphi$ is unbounded below, and so cannot attain its minimum. Up to permutation of their entries, solutions to Niven's equations (\ref{eq:Niven}) are thus enumerated by multi-indices $m \in \mathbb N^{d-1}$ with $|m| = n$. \begin{comment}
In addition to $a$, $n$ and $\epsilon$ as before, fix a multi-index $m \in \mathbb N^{d-1}$ with $|m| = m_1 + \ldots + m_{d-1} = n$. Then there exists exactly one solution of (\ref{eq:Niven}) in the domain $D_m \subseteq \mathbb R^n$ defined by the linear constraints
\begin{align*}
 a_j < \xi_{k} < a_{j+1}&, && 1 \leq j < d, \ \textstyle{\sum_{i=1}^{j-1}m_i < k \leq \sum_{i=1}^{j}m_i}, \\
 \xi_{k'} < \xi_{k}&, && 1\leq k' < k \leq n.
\end{align*}
Indeed, solutions to (\ref{eq:Niven}) are precisely the minimizers of the convex function 
\begin{align}
\label{eq:potential}
& - \sum_{k=1}^n \sum_{j=1}^d (1+2\epsilon_j)\log(\xi_k-a_j) - 4 \sum_{k=1}^n\sum_{\ell\neq k} \log(\xi_k-\xi_\ell)
\end{align}
of $\xi_1,\ldots,\xi_n$, which is a proper function on the bounded convex domain $D_m$. In this way, we obtain exactly one solution for (\ref{eq:Niven}) corresponding to any multi-index $m \in \mathbb N^{d-1}$ with $|m| = n$. Up to re-ordering, these are all solutions, since the function (\ref{eq:potential}) is unbounded below on any of the connected components of its domain of definition where not all $\xi_k$ lie strictly between $a_1$ and $a_d$.
\end{comment}

For fixed $a_1 < \ldots < a_d$, the homogeneous harmonic polynomials of degree $N \in \mathbb N$ of the form (\ref{eq:conicHarmonics}) are therefore in one-to-one correspondence with choices of $\epsilon \in \{0,1\}^d$ and $m \in \mathbb N^{d-1}$ satisfying $2|m| + |\epsilon| = N$. We will denote the corresponding functions of the form (\ref{eq:conicHarmonics}) by $Q_{\epsilon,m}$, or sometimes $Q^a_{\epsilon,m}$ to emphasize the dependence on $a$.

As $\epsilon \in \{0,1\}^d$ and $m \in \mathbb N^{d-1}$ range over all possible choices which satisfy $2|m| + |\epsilon| = N$, the restricted functions $Q_{\epsilon,m}\rvert_{\mathbb S^{d-1}}$ are orthogonal with respect to the standard $L^2$ inner product on $\mathbb S^{d-1}$ \cite[Theorem 3.3]{Volkmer1999}. Thus, by a dimension count, they form an orthogonal basis of the space of spherical harmonics of degree $N$. For a crisp and detailed proof of the orthogonality of $Q_{\epsilon,m}\rvert_{\mathbb S^{d-1}}$, see Volkmer's paper \cite{Volkmer1999}, which also derives the properties above in a different way, hinging on integral calculations in conical coordinates.

\subsubsection{Conical Coordinates}

Fix $d$ real numbers $a_1 < a_2 < \ldots < a_d$. The conical coordinates of a point $x \in (0,\infty)^d$ are given by $r = \left(x_1^2 + \ldots + x_d^2\right)^{\frac12}$ and the $d-1$ solutions of the equation
\begin{align*}
    \frac{x_1^2}{s-a_1} + \ldots + \frac{x_d^2}{s-a_d} = 0,
\end{align*}
listed in ascending order. It is easy verified that these always exist, satisfy $a_1 < s_1 < a_2 < \ldots < a_{d-1} < s_{d-1} < a_d$, and that conical coordinates fashion a diffeomorphism from $(0,\infty)^d$ to the coordinate domain $(0,\infty)\times(a_1,a_2)\times \ldots \times (a_{d-1},a_d)$. 
The coordinate hypersurfaces $\{s_j = \mathrm{const.} \}$ are mutually orthogonal, confocal cones of signature $d-j$.
By evaluating the defining relation
\begin{align*}
    \sum_{j=1}^d x_j^2 \prod_{i\neq j} (s-a_i) = r^2 \prod_{j=1}^{d-1} (s-s_j),
\end{align*}
at $s = a_1,\ldots,a_d$, one obtains formulae for $x_1,\ldots,x_d$:
\begin{align}
    \label{eq:x_in_rs}
    x_j^2 = r^2 \frac{\prod_{i=1}^{d-1}(a_j-s_i)}{\prod_{i\neq j} (a_j-a_i)}
\end{align}
It follows that, in conical coordinates, a homogeneous even polynomial of degree $2n$ pulls back to $r^{2n}P(s)$, where $P(s)$ is a \emph{symmetric} polynomial of degree $(d-1)n$. In particular, $K_\xi(x) = \frac{x_1^2}{\xi - a_1} + \frac{x_2^2}{\xi - a_2} + \ldots + \frac{x_d^2}{\xi - a_d}$ becomes
\begin{align*}
    K_\xi(r,s) = r^2 \frac{\prod_{j=1}^{d-1}(\xi-s_j)}{\prod_{j=1}^d(\xi-a_j)}.
\end{align*}
To see this, fix $x \in \mathbb R^d$ and consider the rational function $\xi \mapsto K_\xi(x)$. It is of degree $d$, has poles at $a_1,\ldots,a_d$, and by the construction of the ellipsoidal coordinates has its zeroes precisely at $s_1,\ldots,s_d$. The leading constant $r^2$ is determined by letting $\xi \rightarrow \infty$, as $\lim_{\xi\to\infty} \xi K_\xi(x) = r^2$.

Thus, conical coordinates are well adapted to the functions $Q_{\epsilon,m}$, which take the form
\begin{align*}
    Q_{\epsilon,m}(r,s) = x_1^{\epsilon_1}x_2^{\epsilon_2}\ldots x_d^{\epsilon_d} \, r^{2|m|} C_{\epsilon,m} \prod_{j=1}^{d-1} P_{\epsilon,m}(s_j),
\end{align*}
where $x_j$ should be thought of as shorthand for the algebraic function of $r$ and $s$ given by (\ref{eq:x_in_rs}), $C_{\epsilon,m} \in \mathbb R \setminus \{0\}$, and
$P_{\epsilon,m}$ is the monic polynomial with zeros at the solutions $\xi_1,\ldots,\xi_n$ of Niven's equations (\ref{eq:Niven}) with parameters $m$ and $\epsilon$. To be precise,
\begin{align*}
    %\label{eq:Lame_polynomial}
     C_{\epsilon,m} &= (-1)^{n(d-1)}\prod_{k=1}^n\prod_{j=1}^d (\xi_k - a_j)^{-1}, & P_{\epsilon,m}(\tau) &= \prod_{k=1}^n (\tau - \xi_k).
\end{align*}

\subsubsection{Proof of \Cref{thm:AgranovskyKrasnov2000}}

Following \cite{AgranovskyKrasnov2000}, let us call a function $f$ of $d$ real variables \emph{$\epsilon$-odd}, for $\epsilon \in \{0,1\}^d$, if $f(x_1,\ldots,-x_j,\ldots,x_d) = (-1)^{\epsilon_j} f(x)$.
Fix a quadratic cone $\mathcal C = \{\sum_{j=1}^d x_j^2/a_j = 0\}$, and denote the space of harmonic homogeneous polynomials of degree $N$ vanishing on $\mathcal C$ by $\mathscr V_N(\mathcal C)$. Since $\mathcal C$ is symmetric along coordinate hyperplanes, $\mathscr V_N(\mathcal C)$ splits into a direct sum of spaces of $\epsilon$-odd functions, $\epsilon \in \{0,1\}^d$, which we shall denote by $\mathscr V_{N,\epsilon}(\mathcal C)$.

Now fix $\epsilon \in \{0,1\}^d$, and consider $u \in \mathscr V_{N,\epsilon}(\mathcal C)$, which we can expand as 
\begin{align*}
    u(x) = \hspace{-3mm}\sum_{\substack{m \in \mathbb N^{d-1} \\ 2|m|+|\epsilon| = N}} \hspace{-3mm}\mu_m \, Q_{\epsilon,m}(x),
\end{align*}
since $\{Q_{\epsilon,m}(x)\}_{2|m|+|\epsilon| = N}$ provides a basis of the $\epsilon$-odd homogeneous harmonic polynomials of degree $N$. In conical coordinates, the cone $\mathcal C$ is given by $\{s_{\sigma} = 0\}$, where $\sigma \in \{1,\ldots,d-1\}$ is such that $a_{\sigma} < 0 < a_{\sigma+1}$, and so the condition $u|_{\mathcal C} = 0$ reads
\begin{align*}
    x_1^{\epsilon_1}x_2^{\epsilon_2}\ldots x_d^{\epsilon_d}  \, r^{2|m|} \hspace{-3mm}\sum_{\substack{m \in \mathbb N^{d-1} \\ 2|m|+|\epsilon| = N}} \hspace{-3mm}\mu_m \, C_{\epsilon,m}P_{\epsilon,m}(0) \prod_{j\neq \sigma} P_{\epsilon,m}(s_j) = 0
\end{align*}
Once we show that the polynomials $\prod_{j\neq \sigma} P_{\epsilon,m}(s_j)$ of $d-2$ variables occuring in this expression are all linearly independent, \Cref{thm:AgranovskyKrasnov2000} is established. After all, this would imply that $\mu_m = 0$ unless $P_{\epsilon,m}(0) = 0$, which is the case if and only one of the components of the solution $(\xi_k)_{k=1}^n$ of Niven's equations (\ref{eq:Niven}) with parameters $\epsilon$ and $m$ is zero.

%It remains to prove linear independence of $\prod_{j\neq \sigma} P_{\epsilon,m}(s_j)$, where $m \in \mathbb N^{d-1}$ ranges over all multi-indices with $2|m| + |\epsilon| = N$.
To this end, let $\nu_m$ be coefficients such that $\sum_{m} \nu_m \prod_{j\neq \sigma} P_{\epsilon,m}(s_j) = 0$. For any $t \in (a_d,\infty)$, $P_{\epsilon,m}(t) \neq 0$, which allows us to define
\begin{align*}
    F_t(r,s) := x_1^{\epsilon_1}x_2^{\epsilon_2}\ldots x_d^{\epsilon_d}  \, r^{2|m|} \sum_{m} \nu_m P_{\epsilon,m}(t)^{-1} \prod_{j = 1}^{d-1} P_{\epsilon,m}(s_j).
\end{align*}
By construction, this polynomial vanishes on the hyperplane $\{s_\sigma = t\}$. Since it is also symmetric in $s_1,\ldots,s_{d-1}$, it thus vanishes on $\{s_j=t\}$ for all $j=1,\ldots,n$, and hence is divisible by $r^2 \prod_{j=1}^{d-1} (s_j - t)$. Translating back into Euclidean coordinates, we find that
\begin{align*}
    \sum_{j=1}^d \frac{x_j^2}{t-a_j} \ \Big \vert \ \sum_{m} \nu_m \, C_{\epsilon,m}^{-1} P_{\epsilon,m}(t)^{-1} Q_{\epsilon,m}(x).
\end{align*}
As shown by Brelot and Choquet \cite{BrelotChoquet1955}, no nonnegative polynomial can divide a nonzero harmonic polynomial, 
%\footnote{Indeed, let $p,q \in \mathbb R[x_1,\ldots,x_d]$ with $p \geq 0$ and $\Delta(pq) = 0$. Denote their leading order homogeneous parts by $p_M$ and $q_N$, and note that $\Delta(p_Mq_N) = 0$. On $\mathbb S^{d-1}$, harmonic homogeneous polynomials are orthogonal to lower order polynomials, so $\int_{\mathbb S^{d-1}} p_M q_N^2 = 0$, a contradiction. I thank Eugenia Malinnikova for showing me this argument.}
so $\sum_{m} \nu_m \, C_{\epsilon,m}^{-1} P_{\epsilon,m}(t)^{-1} Q_{\epsilon,m}(x) = 0$. Since the polynomials $Q_{\epsilon,m}$ are linearly independent, $\nu_m = 0$ for all $m$. \qed

\section{Parametric behaviour of Niven's equations}
\label{sec:proof}

To treat parameters $a \in \mathbb R^d$ where not all components are distinct, it is convenient to introduce the following notation: For a multi-index $m \in \mathbb N^{d-1}$, write $m^\#_j = \sum_{i=1}^{j-1} m_i$, where $j = 1,\ldots,d$.
Given $a_1 \leq \ldots \leq a_d$, $m \in \mathbb N^{d-1}$ and $\epsilon \in \{0,1\}^d$, a convexity argument analogous to that in \Cref{subsec:numberOfSolutions} implies there exists a unique $|m|$-tuple $\xi \in \mathbb R^{|m|}$ such that
\begin{enumerate}
    \item if $a_j = a_{j+1}$, then $a_j = \xi_{m^\#_j + 1} = \ldots = \xi_{m^\#_j + m_j} = a_{j+1}$, while
    \item if $a_j < a_{j+1}$, then $a_j < \xi_{m^\#_j + 1} < \ldots < \xi_{m^\#_j + m_j} < a_{j+1}$, and all such components $\xi_k$ satisfy Niven's equation, 
    \begin{align}
        \label{eq:Niven2}
        \sum_{j=1}^d \frac{1+2\epsilon_j}{\xi_k-a_j} + \sum_{\ell\neq k} \frac{4}{\xi_k-\xi_\ell} = 0.
    \end{align}
\end{enumerate}
Denote this $|m|$-tuple by $\xi_{\epsilon,m}^a$, or just $\xi_{\epsilon,m}$ if the dependence on $a$ is clear.

Suppose $a(t) \in \mathbb R^d$ depends continuously on a parameter $t \in [0,\delta)$, and $a_1(t) < \ldots < a_d(t)$ for all $t \in (0,\delta)$. Write $\xi_{\epsilon,m}(t)$ for $\xi_{\epsilon,m}^{a(t)}$ as introduced above. It is easy to see that $\xi_{\epsilon,m}(t) \to \xi_{\epsilon,m}(0)$ as $t \to 0$, i.e.\ that $\xi_{\epsilon,m}(t)$ is continuous. The next lemma, inspired by Rellich's theorem on eigenvectors of analytic families of self-adjoint operators \cite{Rellich1937}, shows that if the dependence on $t$ is analytic, then $\xi_{\epsilon,m}(t)$ can be continued analytically across $0$.

\begin{lemma}
\label{lem:analytic_continuation}
    Suppose that $a(t) \in \mathbb R^d$ is analytic in $t \in (\alpha,\beta)$, $\alpha < 0 < \beta$, and that for $t \in (0,\beta)$ we have $a_1(t) < \ldots < a_d(t)$. Fix $m \in \mathbb N^d$ and $\epsilon \in \{0,1\}^d$, and consider $\xi_{\epsilon,m}(t) \in \mathbb R^{|m|}$ as introduced above. Then $\xi_{\epsilon,m}: (0,\beta) \to \mathbb R^{|m|}$ extends analytically to a function $\tilde \xi_{\epsilon,m}: (\alpha,\beta) \to \mathbb R^{|m|}$.
\end{lemma}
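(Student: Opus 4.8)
The plan is to prove real-analyticity of $\xi_{\epsilon,m}$ away from the collision locus by the implicit function theorem, and to handle a collision point --- which we may take to be $t=0$ --- by a Rellich-type argument that rules out fractional Puiseux exponents. For $t\in(0,\beta)$ the tuple $\xi_{\epsilon,m}(t)$ is the unique minimiser of the strictly convex potential $\varphi$ of \Cref{subsec:numberOfSolutions} on the bounded cell $D_m$, so the Hessian of $\varphi$ there is positive definite and the Jacobian in $\xi$ of the system \eqref{eq:Niven2} is invertible; by the implicit function theorem $\xi_{\epsilon,m}$ is real-analytic on $(0,\beta)$, and since $\xi_{\epsilon,m}(t)$ stays in the box $[a_1(t),a_d(t)]^{|m|}$ it has a finite limit $\xi_{\epsilon,m}(0)$ as $t\to 0^+$. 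The entries of $a$ collide only on a discrete subset of $(\alpha,\beta)$, as each $a_i-a_j$ is a nonzero analytic function, and away from that subset the same argument applies after re-sorting; so it suffices to produce an analytic extension of $\xi_{\epsilon,m}$ across $0$, and then patch these local extensions, an interval presenting no monodromy obstruction.

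To extend across $0$, clear denominators in \eqref{eq:Niven2} to obtain polynomials $\widehat N_k(\xi,t)$, polynomial in $\xi$ and analytic in $t$, and let $V\subseteq\mathbb C^{|m|}\times\mathbb C$ be their common zero set. Its fibre over each small $t\ne 0$ is finite, so near the real arc $\Gamma=\{(\xi_{\epsilon,m}(t),t):0<t<\beta\}$ the set $V$ is a complex curve, and $\Gamma$ lies on a single irreducible branch $B$ through $(\xi_{\epsilon,m}(0),0)$. Choose a Puiseux parametrisation $t=\tau^q$, $\xi=\psi(\tau)$ of $B$, with $\psi$ holomorphic near $0$, $q\ge 1$ minimal, and $\tau\in(0,\varepsilon)$ corresponding to $\Gamma$; minimality makes $\tau\mapsto(\tau^q,\psi(\tau))$ injective near $0$, and since $\psi$ is real on $(0,\varepsilon)$ it has real Taylor coefficients, whence $\psi(\bar\tau)=\overline{\psi(\tau)}$. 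Everything reduces to showing $q=1$.

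Suppose $q>1$. For $t=-s<0$ the branch $B$ meets the slice $\{t=-s\}$ in the $q$ distinct points $\psi(\tau_j)$, $\tau_j=s^{1/q}e^{i\pi(2j+1)/q}$, $j=0,\dots,q-1$. If some $\psi(\tau_j)$ were real, then $\psi(\bar\tau_j)=\overline{\psi(\tau_j)}=\psi(\tau_j)$ and $\bar\tau_j^q=-s=\tau_j^q$, so $\tau_j$ and $\bar\tau_j$ would have the same image under the injective map above, forcing $\tau_j\in\mathbb R$ and hence $q\mid 2j+1$; since $q>1$ this fails for at least one $j$, so some $\psi(\tau_j)$ is non-real. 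On the other hand, the differences $\psi_k-a_i$ and $\psi_k-\psi_\ell$ are nonzero along $\Gamma$, hence not identically zero on $B$, so for all small $s>0$ outside a finite set no component of any $\psi(\tau_j)$ equals an $a_i(-s)$ or another component of $\psi(\tau_j)$; for such $s$, each $\psi(\tau_j)$ then solves the original (uncleared) system \eqref{eq:Niven2} with the real parameters $a(-s)$. But by the classical theorem going back to Stieltjes, all solutions of \eqref{eq:Niven2} with real $a$ and positive charges --- here $1+2\epsilon_j\in\{1,3\}$ --- are real, contradicting that some $\psi(\tau_j)$ is non-real. Therefore $q=1$, and $t\mapsto(\psi(t),t)$ is the desired analytic extension of $\xi_{\epsilon,m}$ across $0$.

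The only real obstacle is this last step, the exclusion of fractional Puiseux exponents; it uses essentially that \emph{every} complex solution of Niven's equations for real parameters is real, not merely the existence of one solution per cell. One can also avoid citing this reality theorem by an induction on $d$: near a collision, rescale a colliding cluster by writing $a_i=\beta+\rho(t)\alpha_i(t)$ and $\xi_k=\beta+\rho(t)\eta_k(t)$, where $\rho(t)=a_{\max}(t)-a_{\min}(t)$ is analytic and vanishes at $0$; then the $\alpha_i$ are analytic with $|\alpha_i|\le 1$ and not all equal at $0$, and the $\eta_k$ satisfy a Niven-type system of the same or smaller size with an analytic $O(\rho(t))$ perturbation from the charges outside the cluster, so that after at most one full rescaling the problem has strictly fewer parameters. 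This route is self-contained but must carry the perturbation term through the induction.
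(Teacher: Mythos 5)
Your proposal is correct in substance and shares the paper's central mechanism, but it implements the endgame differently. Both arguments complexify the cleared system, isolate the one-dimensional local branch of the solution variety through $(\xi_{\epsilon,m}(0),0)$ containing the real arc, and then use the fact that \emph{every} complex solution of \eqref{eq:Niven} with real parameters is real to force single-valuedness across $t=0$. Where the paper continues the curve over the upper half-disk and invokes Schwarz reflection, you take a Puiseux parametrisation $t=\tau^q$ of the branch and rule out $q>1$ by producing, over negative real $t$, a slice point $\psi(\tau_j)$ with non-real $\tau_j$ that would be a non-real genuine solution of \eqref{eq:Niven2} -- a clean and arguably more concrete way to kill the ramification, since it avoids the boundary-value/reflection subtleties. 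The main thing you leave out is precisely the ingredient the paper takes care to prove: the reality of all complex solutions for real $a$, which you outsource to ``a classical theorem going back to Stieltjes.'' The paper establishes it in a few lines (conjugate the equations, subtract, and observe that the resulting matrix is a positive diagonal plus positive semidefinite rank-one pieces, hence definite); Stieltjes' classical work counts real equilibrium configurations rather than excluding non-real ones, so you should include this short argument rather than cite it. Two smaller imprecisions: the inference ``the fibre over each small $t\neq 0$ is finite, so near $\Gamma$ the set $V$ is a complex curve'' is not quite right as stated, since the cleared system can have positive-dimensional components inside the fibre over the collision value $t=0$; what you need (and what the paper uses) is that the local irreducible component containing the smooth curve points over $t>0$ is pure one-dimensional. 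Likewise, ``stays in the box, hence has a finite limit'' is a non sequitur -- boundedness gives only subsequential limits -- though the continuity of $\xi_{\epsilon,m}$ at $t=0$ is stated (as easy) just before \Cref{lem:analytic_continuation} and may be used. Your alternative sketch via cluster rescaling and induction on $d$ is plausible but not developed enough to assess.
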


\begin{proof}
We begin by showing that for real $a \in \mathbb R^d$ with pairwise distinct components, any \emph{complex} solution $\xi \in \mathbb C^N$ of Niven's equations (\ref{eq:Niven}) is in fact real. If $\xi \in \mathbb C^N$ is such a solution, $\bar \xi \in \mathbb C^N$ must also solve (\ref{eq:Niven}), since $a$ is real. Subtracting these equations and dividing by $2i$, we obtain the system
\begin{align}
\label{eq:Imaginary_Part_of_Niven}
    \sum_{j=1}^d \frac{1+2\epsilon_j}{|\xi_k-a_j|^2} \Im \xi_k + \sum_{\ell\neq k} \frac{4}{|\xi_k-\xi_\ell|^2} \Im \xi_k - \sum_{\ell\neq k} \frac{4}{|\xi_k-\xi_\ell|^2} \Im \xi_\ell = 0,
\end{align}
where $k = 1,\ldots,N$. The matrix $A \in \mathbb R^{N \times N}$ given by
\begin{align*}
    A_{k,\ell} = \begin{cases}
        \sum_{j=1}^d \frac{1+2\epsilon_j}{|\xi_k-a_j|^2} + \sum_{\ell'\neq k} \frac{4}{|\xi_k-\xi_{\ell'}|^2}, & k = \ell \\
         - \frac{4}{|\xi_k-\xi_\ell|^2}, & k \neq \ell 
    \end{cases}
\end{align*}
is positive definite, because it is a sum of a diagonal matrix with positive entries and the matrices $\frac{4}{|\xi_k-\xi_\ell|^2} (e_k - e_\ell) (e_k - e_\ell)^T$ (where $e_k \in \mathbb R^N$ denotes the $k^{th}$ unit vector), which are positive semidefinite. Therefore 
(\ref{eq:Imaginary_Part_of_Niven}) implies $\Im \xi = 0$, i.e.\ $\xi \in \mathbb R^N$.

Consider a small disk $\mathbb D_{\delta}(0) \subseteq \mathbb C$, with $\delta \in (0,\beta]$, such that $a(t) \in \mathbb C^d$ has pairwise distinct components for all $t \in \mathbb D_{\delta}(0) \setminus \{0\}$. After multiplying through by the denominators in (\ref{eq:Niven}), we obtain an analytic system of equations in $\mathbb D_{\delta}(0) \times \mathbb C^N$. Its solutions form an analytic set in $D_{\delta}(0) \times \mathbb C^N$, which may have unwanted irreducible components contained in the zero set of a denominator, i.e. in one of the planes $\{\xi_k = a_j \}$ or $\{\xi_k = \xi_\ell \}$, $k \neq \ell$. However, the local irreducible component $\mathscr X$ containing the smooth real curve $(t,\xi_{\epsilon,m}(t))$, $t \in (0,\delta)$, is one-dimensional. After all, near all $t \in (0,\delta)$ it is a smooth complex curve, i.e.\ one-dimensional, and irreducible analytic sets are pure-dimensional (cf. \cite[Chapter 5, \S 4]{GrauertRemmert1984}). 

After possibly shrinking $\delta$ to avoid any singularities of $\mathscr X$ other than the one at $t = 0$, we can thus analytically continue $\xi_{\epsilon,m}(t)$ to a bounded holomorphic curve over the open half-disk $\mathbb D_{\delta}(0) \cap \{t \in \mathbb C: \Im z > 0\}$. However, since (\ref{eq:Niven}) admits no purely complex solutions for $a(t) \in \mathbb R^d$, the boundary value of $\xi_{\epsilon,m}(t)$ on $(-\delta,\delta)$ must be real. Thus, Schwarz' reflection principle implies $\xi_{\epsilon,m}(t)$ in fact extends to the whole disk $\mathbb D_{\delta}(0)$, and so $\xi_{\epsilon,m}:(0,\beta) \to \mathbb R^n$ extends analytically to a function $\tilde\xi_{\epsilon,m}:(-\delta,\beta) \to \mathbb R^n$.

Note that $\tilde\xi_{\epsilon,m}(t)$ still \emph{solves} Niven's equations for all $t \in (-\delta,0)$, although its components are no longer necessarily ordered. The preceding argument however only required $\xi(t)$ to be a solution of Niven's equations, and so may be repeated at all $t \in (\alpha,\beta)$ where $a(t)$ does not have pairwise distinct components (a discrete subset of $(\alpha,\beta)$ since $a(t)$ is analytic in $t$). This yields an analytic continuation $\tilde \xi_{\epsilon,m}(t)$ of $\xi_{\epsilon,m}(t)$ to the whole interval $(\alpha,\beta)$.
\end{proof}

\subsection{Proof of \Cref{thm:main}}
From now on, we assume $d\geq 3$, and pick up the argument begun in \Cref{sec:preliminaries}. Recall that the dimension of the space $\mathscr V_N(\mathcal C_a)$ of harmonic homogeneous polynomials of a fixed degree $N$ which vanish on $\mathcal C_a$ achieves its minimum on a Zariski open subset of $\mathcal A^d$. It remains to show $\min_{a \in \mathcal A^d} \dim \mathscr V_N(\mathcal C_a) = 0$ unless $N \in \{2,2+d\}$, in which case we must show $\min_{a \in \mathcal A^d} \dim \mathscr V_N(\mathcal C_a) = 1$. This will be achieved in five steps.

\begin{enumerate}
    \item According to \Cref{thm:AgranovskyKrasnov2000}, the number of linearly independent harmonic functions vanishing on $\mathcal C_a$ for a fixed $a \in \mathcal A^d$ is given by the number of solutions of Niven's equations with a zero component. As discussed in \Cref{sec:niven_equations}, the solutions of Niven's equations are enumerated by $\epsilon \in \{0,1\}^d$ and $m \in \mathbb N^{d-1}$. On the Zariski open subset of $\mathcal A^d$ where all components of $a \in \mathcal A^d$ are distinct, each solution $\xi_{\epsilon,m}^a$ depends analytically on $a$. It follows that, at a generic point $a \in \mathcal A^d$, the only values of $\epsilon,m$ for which a component of $\xi_{\epsilon,m}^a$ vanishes are those for which this component vanishes identically.
    \item Next, we specialize to three dimensions. We consider the analytic curve $a(t) = (-1,\frac{2}{1+t},\frac{2}{1-t}), t \in [0,1)$, in $\mathcal A^3$ which connects the round cone $\{2x_1^2-x_2^2-x_3^2 = 0\}$ to the ``degenerate cone'' $\{x_1^2-x_2^2 = 0\}$. With the goal of determining all $k_0,\epsilon$ and $m$ such that $\big(\xi_{\epsilon,m}^{a(t)}\big)_{k_0} \equiv 0$, we obtain asymptotic expressions for $\xi_{\epsilon,m}^{a(t)}$ as $t \to 0$ and as $t \to 1$.
    \item We first use the conditions $\big(\xi_{\epsilon,m}^{a(1)}\big)_{k_0} = 0$ and $\frac{d}{dt}\rvert_{t=1} \big(\xi_{\epsilon,m}^{a(t)}\big)_{k_0} = 0$ to obtain a Pell-type equation which drastically restricts the possible values of $k_0$, $m$ and $\epsilon$.
    \item Finally, we show that for these values of $k_0$, $m$ and $\epsilon$, $\big(\xi_{\epsilon,m}^{a(0)}\big)_{k_0} \neq 0$, except in two special cases which correspond to the different behaviour for $N = 2$ and $N=d+2$. This step uses an asymptotic expression for zeroes of associated Legendre polynomials, whose proof is deferred to \Cref{sec:WKB}.
    \item In the last step of the proof, we establish \Cref{thm:main} via an induction on the dimension.
\end{enumerate}

\begin{figure}[ht]
\centering
\includegraphics[width=0.9\textwidth]{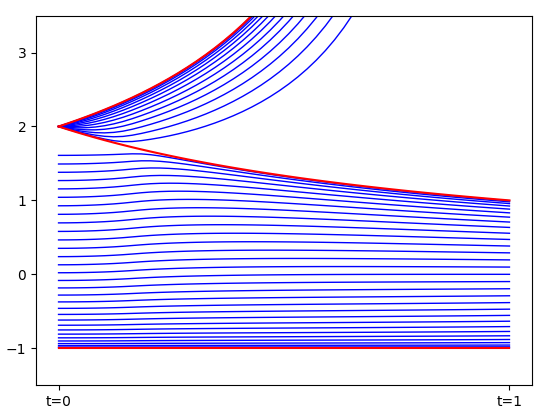}
    \caption{Plot of $\xi_{\epsilon,m}^{a(t)}$ with $a(t)=(-1,\frac{2}{2+t},\frac{2}{2-t})$,
    $m=(31,13)$ and $\epsilon=(1,1,0)$, $t \in [0,1)$}
    \label{fig:31-13_all}
\end{figure}

Steps (2), (3) and (4) are illustrated well by \Cref{fig:31-13_all}, which plots $\xi_{\epsilon,m}^{a(t)}$ for $t \in [0,1)$, $a(t)$ as defined in Step (2), $\epsilon = (1,1,0)$ and $m = (31,13)$. As we will derive in Step (3), these are in fact the next smallest values of $\epsilon$ and $m$ for which $\big(\xi_{\epsilon,m}^{a(1)}\big)_{k_0} = 0$ and $\frac{d}{dt}\rvert_{t=1} \big(\xi_{\epsilon,m}^{a(t)}\big)_{k_0} = 0$, after the ``trivial'' solutions $\epsilon = (0,0,0)$ and $m = (1,0)$ and $\epsilon = (1,1,1)$ and $m = (1,0)$. Step (4) is concerned with establishing that $\big(\xi_{\epsilon,m}^{a(0)}\big)_{k_0} \neq 0$ for all such $\epsilon$ and $m$. As one can see in the figure, the true value of $\big(\xi_{\epsilon,m}^{a(0)}\big)_{k_0}$ is slightly below zero. This is highlighted in \Cref{fig:31-13_lower}.

\begin{figure}[t]
\centering
\includegraphics[width=0.9\textwidth]{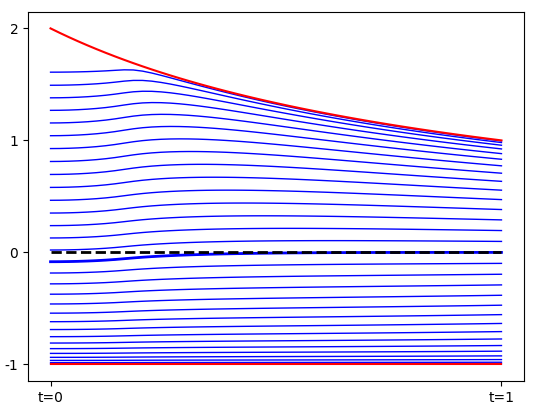}
    \caption{Lower components of $\xi_{\epsilon,m}^{a(t)}$ as in \Cref{fig:31-13_all}, highlighting $\big(\xi_{\epsilon,m}^{a(t)}\big)_{k_0}$ (bold) and the line $\xi=0$ (bold, dashed).}
    \label{fig:31-13_lower}
\end{figure}

\subsubsection{Step 1: Further reduction of the problem}

Let $\widetilde{\mathcal A}^d$ denote the set of $a \in \mathcal A^d$ (as defined in \Cref{sec:preliminaries}) with pairwise distinct components, and let $\widetilde{\mathcal A}^d_0$ be an arbitrary one of its connected components. For $a \in \widetilde{\mathcal A}^d$, \Cref{thm:AgranovskyKrasnov2000} says that $\dim \mathscr V_N(\mathcal C_a)$ is given by the number of $\epsilon \in \{0,1\}^d$, $m \in \mathbb N^{d-1}$ with $|\epsilon| + 2|m| = N$ such that one of the components of $\xi_{\epsilon,m}^a$ vanishes. For fixed $\epsilon$ and $m$, the set of $a \in \widetilde{\mathcal A}^d_0$ such that $\xi_{\epsilon,m}^a$ has a zero component is either of measure zero or all of $\widetilde{\mathcal A}^d_0$, since $\xi_{\epsilon,m}^a$ depends analytically on $a \in \widetilde{\mathcal A}^d_0$. In the latter case, as all components of $\xi_{\epsilon,m}^a$ are distinct, it must be the same component that vanishes for all $a \in \widetilde{\mathcal A}^d_0$, i.e. $\big(\xi_{\epsilon,m}^a\big)_{k_0} \equiv 0$ for some $k_0 \in \{1,\ldots,|m|\}$. Thus, $\min_{a \in \mathcal A_0^d} \dim \mathscr V_N(\mathcal C_a)$ is in fact given by the number of $\epsilon,m$ with $|\epsilon|+2|m| = N$ such that there exists $k_0 \in \{1,\ldots,|m|\}$ with $\big(\xi_{\epsilon,m}^a\big)_{k_0} \equiv 0$ for all $a \in \mathcal A_0^d$.
We will show that, unless $d = 2$, this can only occur for $|m| = 1$, $\epsilon = \{0,\ldots,0\}$ and $|m| = 1$, $\epsilon = \{1,\ldots,1\}$. We already showed in \Cref{sec:preliminaries} that $\min_{a \in \mathcal A^d} \dim \mathscr V_N(\mathcal C_a)$ is attained on a Zariski-open subset of $\mathcal A^d$, which in particular intersects $\widetilde{\mathcal A}_0^d$. Therefore, showing the above implies our claim that $\min_{a \in \mathcal A^d} \dim \mathscr V_N(\mathcal C_a) = 0$ unless $N \in \{2,2+d\}$, in which case $\min_{a \in \mathcal A^d} \dim \mathscr V_N(\mathcal C_a) = 1$.

\begin{comment}It suffices to consider a fixed connected component of $\widetilde{\mathcal A}^d$, since, by analytic continuation (\Cref{lem:analytic_continuation}) and the identity principle, the existence of $m$, $\epsilon$ and $k_0$ with $\big(\xi_{\epsilon,m}^a\big)_{k_0} \equiv 0$ on one connected component implies that on a different connected component, the same holds for a certain permutation $(m',\epsilon')$ of $(m,\epsilon)$ and a possibly different $k_0$ --- an issue caused by the ordering which was artificially imposed on $a$ and $\xi$ to avoid ambiguity.
\end{comment}

\subsubsection{Three dimensions: Initial observations.}
\label{sec:3d_1}
For the case $d=3$, consider $a(t) = (-1,\frac{2}{1+t},\frac{2}{1-t})$, $t \in [0,1)$. At $t=0$, the corresponding cone $\mathcal C_t := \mathcal C_{a(t)}$ is the round cone $\{2x_1^2 = x_2^2+x_3^2\}$. As $t \to 1$, $\mathcal C_{t}$ degenerates to a union of two orthogonal planes, $\{x_1=x_2\}$ and $\{-x_1=x_2\}$. Fix $m \in \mathbb N^2$ and $\epsilon \in \{0,1\}^3$ and write $\xi(t) = \xi^{a(t)}_{\epsilon,m}$. Our first task is to obtain asymptotic formulas for $\xi_k(t)$ as $t \to 0$ and as $t \to 1$. The underlying idea behind these is the observation that the homogeneous harmonic polynomial
\begin{align}
    \label{eq:conicHarmonics3D}
    &Q_{\epsilon,m}^{a(t)}(x) = x_1^{\epsilon_1}x_2^{\epsilon_2}x_3^{\epsilon_3} \prod_{k=1}^{|m| } \sum_{j=1}^{3} \frac{x_j^2}{\xi_k(t) - a_j(t)}
\end{align}
converges to a standard spherical harmonic as $t \to 0$ and as $t \to 1$, when properly normalized. This observation was already made by Felix Klein in 1881 \cite[\S 3]{Klein1881}. Since Klein appeals to the reader's geometric intution (\emph{``Geometrische Anschauung''}) for proof, we will provide a rigorous argument, via Niven's equations, as part of the proof of the following lemma. In its statement, the notation $P^\nu_\mu$ stands for the associated Legendre polynomial of degree $\mu$ with angular parameter $\nu$, as defined e.g. in \cite{AbramowitzStegun1964}. Note that, for integers $0 \leq \nu \leq \mu$, $P^\nu_\mu$ has exactly $\mu-\nu$ zeros in the interval $(-1,1)$, which furthermore lie symmetric around $0$. Hence, $\left \lfloor \frac{\mu-\nu}{2} \right\rfloor $ zeros lie in $(0,1)$.

\begin{lemma}
\label{lem:asymptotics_at_0_and_1}
    Consider $\xi(t)$ as introduced above.
    As $t \to 0$,
    \begin{align}
        \label{eq:asymptotics_0_m1}
        \xi_k(t) &= -1 + 3\zeta_k^2 + \mathcal O(|t|), & & 1 \leq k \leq m_1, \\
        \label{eq:asymptotics_0_m2}
        \xi_k(t) &= 2 - 2t\cos\left(\tfrac{2(k-m_1)-1+\epsilon_2}{2m_2+\epsilon_2+\epsilon_3} \pi \right) + \mathcal O(|t|^2), & & m_1 + 1 \leq k \leq |m|,
    \end{align}
    where $\zeta_k$ denotes the $k^{th}$ zero of $ P^{2 m_2+\epsilon_2+\epsilon_3}_{2|m|+|\epsilon|}$ in $(0,1)$. As $t \to 1$,
    \begin{align}
        \label{eq:asymptotics_1_m1}
        \xi_k(t) &= -\cos\left(\tfrac{2k-1+\epsilon_1}{2m_1+\epsilon_1+\epsilon_2} \pi\right) + \mathcal O(|1-t|) & & 1 \leq k \leq m_1 \\
        \label{eq:asymptotics_1_m2}
        \xi_k (t) &= \frac{2}{1-t}\left(1 - \tilde \zeta_{|m|-k+1}^2\right) + \mathcal O(1), & & m_1 + 1 \leq k \leq |m|,
    \end{align}
    where $\tilde \zeta_\ell$ denotes the $\ell^{th}$ zero of $P^{2m_1 + \epsilon_1+\epsilon_2}_{|m|+|\epsilon|}$ in $(0,1)$.
\end{lemma}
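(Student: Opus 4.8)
The plan is to pin down the limiting positions of the roots $\xi_k(t)$ directly from Niven's equations (\ref{eq:Niven2}), using the rigidity coming from their electrostatic interpretation. As $t\to 0$ the parameters $a_2(t),a_3(t)$ collide at $2$, and as $t\to 1$ one has $a_3(t)\to\infty$; in either limit the $k$-th equation of (\ref{eq:Niven2}) degenerates, and the whole system decouples into two independent subsystems, one for the $m_1$ ``lower'' roots and one for the $m_2$ ``upper'' roots, each of the electrostatic form $\tfrac{w_-}{\xi_k-c_-}+\tfrac{w_+}{\xi_k-c_+}+\sum_{\ell\neq k}\tfrac{4}{\xi_k-\xi_\ell}=0$ with marked points $c_\pm$ and positive charges $w_\pm$. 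By the classical theorem of Stieltjes, after the affine change of variable carrying $[c_-,c_+]$ to $[-1,1]$ such a system is exactly the equilibrium condition for the zeros of the Jacobi polynomial $P_n^{(\alpha,\beta)}$ with $\alpha=\tfrac{w_+}{2}-1$ and $\beta=\tfrac{w_-}{2}-1$. In our case each effective charge is either of the form $1+2\epsilon_j$ (so the associated parameter is $\pm\tfrac12$) or an even integer (so the associated parameter is a nonnegative integer $\nu$). If both parameters are $\pm\tfrac12$, the Jacobi polynomial is a Chebyshev polynomial of one of the four kinds and its zeros are explicit cosines; this yields the trigonometric formulas (\ref{eq:asymptotics_0_m2}) and (\ref{eq:asymptotics_1_m1}). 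If one parameter is a nonnegative integer $\nu$, then (using also $P_n^{(\alpha,\beta)}(x)=(-1)^nP_n^{(\beta,\alpha)}(-x)$ to put $\nu$ in the first slot) the quadratic transformation of Gegenbauer polynomials, $C_{2n+\delta}^{\lambda}(\zeta)\propto\zeta^{\delta}P_n^{(\lambda-1/2,\delta-1/2)}(2\zeta^2-1)$ for $\delta\in\{0,1\}$, rewrites the Jacobi zeros as those of $C_{\mu-\nu}^{\nu+1/2}$, which is a constant multiple of $\tfrac{d^\nu}{dx^\nu}P_\mu$ and hence has the same zeros in $(-1,1)$ as the associated Legendre polynomial $P^{\nu}_{\mu}$; composing with the affine map turns this into the substitution $\xi=-1+3\zeta^2$, resp.\ $\xi=\tfrac{2}{1-t}(1-\tilde\zeta^2)$, producing (\ref{eq:asymptotics_0_m1}) and (\ref{eq:asymptotics_1_m2}). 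This is the rigorous form of Klein's remark that $Q^{a(t)}_{\epsilon,m}$ degenerates to a spherical harmonic.

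For $t\to 0$: the curve $a(t)=(-1,\tfrac{2}{1+t},\tfrac{2}{1-t})$ is analytic on $(-1,1)$ and ordered for $t\in(0,1)$, with $a_2(0)=a_3(0)=2$, so \Cref{lem:analytic_continuation} gives that each $\xi_k(t)$ extends analytically across $t=0$; hence $\xi_k(t)=\xi_k(0)+\mathcal O(|t|)$, and in fact $\xi_k(t)=\xi_k(0)+\xi_k'(0)\,t+\mathcal O(|t|^2)$. Here $\xi_k(0)\in(-1,2)$ for $k\leq m_1$, while $\xi_k(0)=2$ for $k>m_1$, these roots being squeezed between $a_2$ and $a_3$. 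Passing to the limit in the $k$-th equation of (\ref{eq:Niven2}) for $k\leq m_1$, and using that the $m_2$ upper roots all tend to $2$, collapses the three terms at $a_1,a_2,a_3$ and the $m_2$ interactions with the upper roots into $\tfrac{1+2\epsilon_1}{\xi_k+1}+\tfrac{2+2\epsilon_2+2\epsilon_3+4m_2}{\xi_k-2}$; the resulting two-charge system on $(-1,2)$, treated as above, gives $\xi_k(0)=-1+3\zeta_k^2$. For $k>m_1$ I would set $\xi_k=\tfrac{a_2+a_3}{2}+\tfrac{a_3-a_2}{2}\,y_k$, multiply the $k$-th equation by $\tfrac{a_3-a_2}{2}=\mathcal O(t)$ to remove the blow-up, and let $t\to 0$; the bounded terms disappear and one is left with the two-charge system $\tfrac{1+2\epsilon_2}{y_k+1}+\tfrac{1+2\epsilon_3}{y_k-1}+\sum_{\ell>m_1,\,\ell\neq k}\tfrac{4}{y_k-y_\ell}=0$ on $(-1,1)$, whose Chebyshev solution is $y_k(0)=-\cos\!\bigl(\tfrac{2(k-m_1)-1+\epsilon_2}{2m_2+\epsilon_2+\epsilon_3}\pi\bigr)$. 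Since $\tfrac{a_3-a_2}{2}=2t+\mathcal O(t^3)$ and $\tfrac{a_2+a_3}{2}=2+\mathcal O(t^2)$, analyticity of $\xi_k$ forces $\xi_k(t)=2+2y_k(0)\,t+\mathcal O(t^2)$, which is (\ref{eq:asymptotics_0_m2}).

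For $t\to 1$: now $t=1$ is an endpoint at which $a_3\to\infty$, so \Cref{lem:analytic_continuation} does not apply and I would argue directly. Put $s=1-t$ and rescale the upper roots, $u_k=\tfrac{s}{2}\,\xi_k$ for $k>m_1$, keeping $\xi_k$ for $k\leq m_1$. A short computation shows that, after clearing denominators, (\ref{eq:Niven2}) becomes a system in $(\xi_1,\ldots,\xi_{m_1},u_{m_1+1},\ldots,u_{|m|})$ whose coefficients are analytic in $s$ near $s=0$; at $s=0$ it decouples into the two-charge system $\tfrac{1+2\epsilon_1}{\xi_k+1}+\tfrac{1+2\epsilon_2}{\xi_k-1}+\sum_{\ell\leq m_1,\,\ell\neq k}\tfrac{4}{\xi_k-\xi_\ell}=0$ on $(-1,1)$ for the lower roots, and the two-charge system $\tfrac{2+2\epsilon_1+2\epsilon_2+4m_1}{u_k}+\tfrac{1+2\epsilon_3}{u_k-1}+\sum_{\ell>m_1,\,\ell\neq k}\tfrac{4}{u_k-u_\ell}=0$ on $(0,1)$ for the rescaled upper roots. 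At each of these solutions the Jacobian of the corresponding subsystem is minus a matrix of the form shown to be positive definite in the proof of \Cref{lem:analytic_continuation}, hence invertible; since moreover the coupling between the two subsystems vanishes at $s=0$, the full Jacobian at $s=0$ is block-diagonal with invertible blocks, so the implicit function theorem continues $(\xi_k(t),u_k(t))$ analytically to $s=0$. Reading off the two limiting solutions (Stieltjes, the quadratic transformation, the Gegenbauer--Legendre identity) gives (\ref{eq:asymptotics_1_m1}) with error $\mathcal O(|1-t|)$, and, since $\xi_k=\tfrac{2}{s}\,u_k(s)=\tfrac{2}{s}\,u_k(0)+2u_k'(0)+\mathcal O(s)$, gives (\ref{eq:asymptotics_1_m2}) with error $\mathcal O(1)$.

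I expect the main obstacle to be the analytic bookkeeping rather than any individual identity: choosing the rescalings so that the perturbed system of Niven equations extends analytically to the degenerate parameter value, checking that the degenerate systems are genuinely the two-charge systems claimed (in particular identifying the effective charges correctly), and using the nondegeneracy of the Stieltjes configurations --- via the positive-definiteness established in the proof of \Cref{lem:analytic_continuation} --- to obtain the stated error rates. The remaining bookkeeping of indices, so that the associated Legendre and Chebyshev formulas list the roots in the prescribed ascending order, is routine but must be carried out carefully.
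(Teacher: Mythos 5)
Your route is genuinely different from the paper's, and the identifications at its core are correct: the decoupled systems you write down do have the effective charges you claim, and Stieltjes' electrostatic characterization of Jacobi zeros, combined with the quadratic Gegenbauer transformation, the swap $P_n^{(\alpha,\beta)}(x)=(-1)^nP_n^{(\beta,\alpha)}(-x)$, and the affine changes of variable, reproduces exactly $-1+3\zeta_k^2$, the Chebyshev cosines, and $\tfrac{2}{1-t}(1-\tilde\zeta_{|m|-k+1}^2)$ (your computation in fact confirms that the degree in the last displayed Legendre polynomial of the lemma should read $2|m|+|\epsilon|$, as it does later in the paper). The paper never mentions Jacobi polynomials: it normalizes the harmonic polynomial $Q^{a(t)}_{\epsilon,m}$, shows its limits at $t=0$ and $t=1$ are harmonic polynomials of the product form $|x|^{\cdot}f(x_1/|x|)\,g(x_2,x_3)$, identifies these limits with standard spherical harmonics, and reads off the limits of the $\xi_k$ by comparing quadratic factors. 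Your argument is more direct at the level of the equations; the paper's buys the needed non-degeneracy almost for free (see below). Also, at $t=1$ your detour through a hand-built implicit function theorem argument is avoidable: \Cref{lem:analytic_continuation} \emph{does} apply there after rescaling, since $\tilde a(t)=\tfrac{1-t}{2}a(t)$ is analytic across $t=1$ and scaling invariance gives $\xi(t)=\tfrac{2}{1-t}\xi^{\tilde a(t)}_{\epsilon,m}$; this is what the paper does, and it is simpler than your IFT construction, which as written still needs the global uniqueness of the ordered solution for $t<1$ (the convexity argument of \Cref{sec:niven_equations}) to identify the IFT branch through the decoupled solution with the branch $\xi(t)$ you actually care about.

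The one step that does not survive scrutiny as written is the passage to the limit in the (rescaled) Niven equations. To conclude that the limiting configurations solve the decoupled two-charge systems, you must know that the limits stay in the open intervals and do not collide: for $t\to0$ you need $y_k(0)\in(-1,1)$ with the $y_k(0)$ pairwise distinct, and $\xi_k(0)\in(-1,2)$, distinct, for $k\le m_1$; analyticity of $\xi_k$ gives convergence of $y_k$ but neither interiority nor distinctness, and you assert rather than prove $\xi_k(0)\in(-1,2)$. This is precisely where the paper invests its only non-formal effort: the endpoint degeneracies ($\alpha_k\in\{-1,2\}$, $\alpha'_k\in\{-1,1\}$, etc.) are excluded by Brelot--Choquet's theorem that a nonzero harmonic polynomial admits no nonnegative divisor, while collisions are harmless in the paper's factor-comparison because the limit polynomial is identified as a whole. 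Within your framework the repair is not hard but must be supplied: for the extreme index of a hypothetical colliding cluster (or of a root tending to an endpoint), all unbounded terms in the rescaled equation have the same sign while the remaining terms stay bounded, contradicting the equation; alternatively, a convergence-of-minimizers argument for the rescaled logarithmic energies, whose limits are proper on the open ordered simplices, yields interiority and distinctness at once. Only after such an argument may Stieltjes' uniqueness be invoked to identify the limits with the Jacobi zeros.
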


\begin{proof}
    We first show, using the scaling and translation invariance of Niven's equations, that there exist $\alpha, \beta \in \mathbb R^{m_1}$ and $\alpha',\beta' \in \mathbb R^{m_2}$ such that
    \begin{align}
        \label{eq:undetermined_asymptotics_0_m1}
        \xi_k(t) &= \alpha_k + \mathcal O(|t|), & & 1 \leq k \leq m_1, \\
        \label{eq:undetermined_asymptotics_0_m2}
        \xi_k(t) &= 2 + 2t \alpha_k' + \mathcal O(|t|^2), & & 1 \leq k \leq m_2
    \end{align}
    as $t \to 0$ and
    \begin{align}
        \label{eq:undetermined_asymptotics_1_m1}
        \xi_k(t) &= \beta_k + \mathcal O(|1-t|) & & 1 \leq k \leq m_1, \\
        \label{eq:undetermined_asymptotics_1_m2}
        \xi_k (t) &= \frac{2}{1-t}\beta_k' + \mathcal O(1), & & 1 \leq k \leq m_2
    \end{align}
    as $t \to 1$. We then use these expressions to normalize $Q_{\epsilon,m}^{a(t)}$ so it has a nonzero limit as $t \to 0$ and $t \to 1$. Finally, we identify both limits as ``standard'' spherical harmonics, defined via associated Legendre polynomials, and obtain the desired asymptotics by comparing the linear and quadratic factors which occur.

    Since $a(t)$ is analytic at $t=0$, \Cref{lem:analytic_continuation} implies that $\xi(t)$ extends analytically across $0$ as well, immediately establishing \eqref{eq:undetermined_asymptotics_0_m1}. The components $\xi_{m_1+1}(t),\ldots,\xi_{|m|}(t)$ are constrained in the interval $(a_2(t),a_3(t))$, and $a_2(0) = a_3(0) = 2$. Thus, $\lim_{t \to 0} \xi_{k}(t) = 2$ for all $k \in \{m_1 + 1,\ldots,m_2\}$, which establishes \eqref{eq:undetermined_asymptotics_0_m2} since $\xi(t)$ is differentiable.

    To obtain \eqref{eq:undetermined_asymptotics_1_m1} and \eqref{eq:undetermined_asymptotics_1_m2}, introduce 
    \begin{align*}
        \tilde a(t) = \frac{1-t}{2} a(t) = \left( \frac{t-1}{2}, \frac{1-t}{1+t}, 1\right),
    \end{align*}
    which extends analytically across $t=1$. The scaling invariance of Niven's equations implies $\xi(t) = \frac{2}{1-t} \xi_{\epsilon,m}^{\tilde a(t)}$. By \Cref{lem:analytic_continuation}, $\xi_{\epsilon,m}^{\tilde a(t)}$ extends analytically across $t=1$ since $\tilde a(t)$ does. Because $\tilde a_1(1) = \tilde a_2(1) = 0$, we have $\big( \xi_{\epsilon,m}^{\tilde a(t)} \big)_k = 0$ for $k = 1,\ldots,m_1$, so $\xi_k(t)$ in fact stays bounded for these values of $k$. This establishes \eqref{eq:undetermined_asymptotics_1_m1} and \eqref{eq:undetermined_asymptotics_1_m2}.

    We now normalize $Q_{\epsilon,m}^{a(t)}(x)$ as follows:
    \begin{align*}
        \tilde Q_{\epsilon,m}^{a(t)}(x) :&= (a_3(t)-a_2(t))^{m_2} Q_{\epsilon,m}^{a(t)}(x) \\ = 
        &\ x_1^{\epsilon_1}x_2^{\epsilon_2}x_3^{\epsilon_3}
        \left( \prod_{k=1}^{m_1} \sum_{j=1}^3 \frac{x_j^2}{\xi_k(t)-a_j(t)} \right)\left( \prod_{k=m_1+1}^{|m|} \sum_{j=1}^3 \frac{x_j^2}{\frac{\xi_k(t)-a_j(t)}{a_3(t)-a_2(t)}} \right).
    \end{align*}
    Note that $a_3(t) - a_2(t) = 4t + \mathcal O(|t|)$ as $t \to 0$ and $a_3(t) - a_2(t) = \frac{2}{1-t} + \mathcal O(1)$ as $t \to 1$. Combining this with \eqref{eq:undetermined_asymptotics_0_m1}, \eqref{eq:undetermined_asymptotics_0_m2}, \eqref{eq:undetermined_asymptotics_1_m1} and \eqref{eq:undetermined_asymptotics_1_m2} yields
    \begin{align}
        \label{eq:spherical_harmonic_0_undetermined}
        \begin{split}
            \lim_{t \to 0} \tilde Q_{\epsilon,m}^{a(t)}(x) = 
            & \ x_1^{\epsilon_1}
            \prod_{k=1}^{m_1} \left( \frac{x_1^2}{\alpha_k + 1} + \frac{x_2^2+x_3^2}{ \alpha_k - 2} \right) \\ 
            \cdot & \, x_2^{\epsilon_2}x_3^{\epsilon_3}
            \prod_{k=m_1+1}^{|m|} \left( \frac{2 x_2^2}{\alpha_k' + 1} - \frac{2 x_3^2}{1 - \alpha_k'}\right),
        \end{split}
    \end{align}
    \begin{align}
        \label{eq:spherical_harmonic_1_undetermined}
        \begin{split}
        \lim_{t \to 1} \tilde Q_{\epsilon,m}^{a(t)}(x) = 
        & \ x_1^{\epsilon_1}x_2^{\epsilon_2}
         \prod_{k=1}^{m_1} \left( \frac{x_1^2}{\beta_k + 1} - \frac{x_2^2}{ 1 - \beta_k} \right) \\
        \cdot & \, x_3^{\epsilon_3}
         \prod_{k=m_1+1}^{|m|} \left( \frac{x_1^2 + x_2^2}{\beta_k'} + \frac{x_3^2}{\beta_k' - 1}\right),
        \end{split}
    \end{align}
    assuming that $\alpha_k \not\in \{-1,2\}$, $\alpha'_k \not\in \{-1,1\}$, $\beta_k \not\in \{-1,1\}$ and $\beta_k' \not\in \{0,1\}$. That this is indeed not the case can be seen as follows: Assume, for example, that $\alpha_1' = -1$ (and so $\xi_{m_1+1}(t) = 2 - 2t + \mathcal O(|t|^2) = a_2(t) + \mathcal O(|t|^2)$. Then the corresponding factor $\sum_{j=1}^3 \frac{x_j^2}{\xi_k(t)-a_j(t)}$ should be normalized by multiplying it with $(\xi_k(t)-a_2(t))$ instead of $(a_3(t)-a_2(t))$, so it converges to $x_2^2$. After proceeding analogously with the remaining factors, we consider $\lim_{t \to 0} \tilde Q_{\epsilon,m}(t)$, which is now a nonzero harmonic homogeneous polynomial divisible by $x_2^2$ -- a contradiction against Brelot and Choquet's theorem that a harmonic polynomial cannot have nonnegative divisors.
    
    Returning to \eqref{eq:spherical_harmonic_0_undetermined}, we observe that $\lim_{t \to 0} \tilde Q_{\epsilon,m}^{a(t)}(x)$ is a harmonic homogeneous polynomial of the form $|x|^{2m_1 + \epsilon_1} f(|x|^{-1}x_1) g(x_2,x_3)$, with $f$ a polynomial of degree $2m_1 + \epsilon_1$ and $g$ a product of $2m_2+\epsilon_2+\epsilon_3$ homogeneous linear factors. It is well known, and easy to see e.g.\ by separating variables on the sphere $\{|x|=1\}$, that this means $f$ is a multiple of the associated Legendre polynomial $P^{2m_2 + \epsilon_2 + \epsilon_3}_{2|m|+|\epsilon|}$, and $g(x_2,x_3) = \Re \omega\, (x_2+ix_3)^{2m_2 + \epsilon_2 + \epsilon_3}$ for some $\omega \in \mathbb C^\ast$. Taking into account that $g$ is divisible by $x_2^{\epsilon_2}x_3^{\epsilon_3}$, we obtain
    \begin{align}
        \label{eq:spherical_harmonic_0_determined}
        \begin{split}
            \lim_{t \to 0} \tilde Q_{\epsilon,m}^{a(t)}(x)
        = 
        & \ c_{\epsilon,m}\, |x|^{2m_1 + \epsilon_1} P^{2 m_2+\epsilon_2 + \epsilon_3 }_{2|m|+|\epsilon|}\left(|x|^{-1}x_1\right)  \\ \cdot & \, \Re \left(i^{\epsilon_3}(x_2+i x_3)^{2 m_2+\epsilon_2+\epsilon_3}\right),
        \end{split}
    \end{align}
    for an (irrelevant) constant $c_{\epsilon,m} \in \mathbb R$. Comparing linear and quadratic factors occurring in the expression \eqref{eq:spherical_harmonic_0_determined} with those in \eqref{eq:spherical_harmonic_0_undetermined} yields \eqref{eq:asymptotics_0_m1} and \eqref{eq:asymptotics_0_m2}. Indeed, comparing the zeros of
    \begin{align*}
        |x|^{2m_1 + \epsilon_1}  P_{2|m|+|\epsilon|}^{2m_2 + \epsilon_2 + \epsilon_3}\left(\frac{x_1}{|x|}\right) &= x_1^{\epsilon_1} \prod_{k=1}^{m_1} (x_1^2 - \zeta_k^2 |x|^2) \text{ and } \\
        x_1^{\epsilon_1}
             \prod_{k=1}^{m_1} \left( \frac{x_1^2}{\alpha_k + 1} + \frac{x_2^2+x_3^2}{ \alpha_k - 2} \right) &= x_1^{\epsilon_1}
             \prod_{k=1}^{m_1}  \frac{(\alpha_k+1)|x|^2 - 3 x_1^2}{(\alpha_k + 1)(\alpha_k-2)}
    \end{align*}
    shows $\alpha_k = -1 + 3\zeta_k^2$ (since both $\alpha_k$ and $\zeta_k$ are ordered increasingly). By comparing linear factors in $\Re \left(i^{\epsilon_3}(x_2+i x_3)^{2 m_2+\epsilon_2+\epsilon_3}\right)$, which agrees with
    \begin{align*}
          x_2^{\epsilon_2}x_3^{\epsilon_3}
            \prod_{k=1}^{m_2} \left( \cos\left(\tfrac{2k-1+\epsilon_2}{2m_2+\epsilon_2+\epsilon_3} \tfrac{\pi}{2} \right)^2 \! x_2^2 - \sin\left(\tfrac{2k-1+\epsilon_2}{2m_2+\epsilon_2+\epsilon_3} \tfrac{\pi}{2} \right)^2 \! x_3^2 \right)
    \end{align*}
    up to a constant, and $x_2^{\epsilon_2}x_3^{\epsilon_3} \prod_{k=m_1+1}^{|m|} \left( \frac{2 x_2^2}{\alpha_k' + 1} - \frac{2 x_3^2}{1 - \alpha_k'}\right)$, we find that 
    \begin{align*}
        \frac{\alpha_k'+1}{1-\alpha_k'} = \frac{\sin\left(\tfrac{2k-1+\epsilon_2}{2m_2+\epsilon_2+\epsilon_3} \tfrac{\pi}{2} \right)^2}{\cos\left(\tfrac{2k-1+\epsilon_2}{2m_2+\epsilon_2+\epsilon_3} \tfrac{\pi}{2} \right)^2}.
    \end{align*}
    After some simplification we obtain $\alpha'_k = - \cos\left(\tfrac{2k-1+\epsilon_3}{2m_2+\epsilon_2+\epsilon_3} \pi \right)$ as claimed.

    Arguing analogously in the case of the limit $t \to 1$, we find that
    \begin{align}
        \label{eq:spherical_harmonic_1_determined}
    \begin{split}
        \lim_{t \to 1} \tilde Q_{\epsilon,m}^{a(t)}(x)
        = 
        & \ c'_{\epsilon,m} \, \Re \left(i^{\epsilon_2} (x_1+i x_2)^{2 m_1+\epsilon_1+\epsilon_2} \right) \\ \cdot & \, |x|^{2m_2 + \epsilon_3} P^{2 m_1 + \epsilon_1 + \epsilon_2 }_{2|m|+|\epsilon|}\left(|x|^{-1}x_3\right),
    \end{split}
    \end{align}
    for some $c'_{\epsilon,m} \in \mathbb R$. The comparison of coefficients yielding $\beta$ works exactly as in the case of $\alpha'$, except with $m_2$, $\epsilon_2$ and $\epsilon_3$ replaced by $m_1$, $\epsilon_1$ and $\epsilon_2$. For $\beta'$, we must compare the factors of
    \begin{align*}
        |x|^{2m_2 + \epsilon_3}  P_{2|m|+|\epsilon|}^{2m_1 + \epsilon_1 + \epsilon_2}\left(\frac{x_3}{|x|}\right) &= x_3^{\epsilon_3} \prod_{k=1}^{m_2} (x_3^2 - \tilde \zeta_k^2 |x|^2) \text{ and } \\
        x_3^{\epsilon_3}
             \prod_{k=1}^{m_2} \left( \frac{x_1^2+x_2^2}{\beta_k'} + \frac{x_3^2}{ \beta_k' - 1} \right) &= x_3^{\epsilon_3}
             \prod_{k=1}^{m_2}  \frac{(\beta_k'-1)|x|^2 + x_3^2}{\beta_k'(\beta_k'-1)},
    \end{align*}
    which immediately leads to $\beta'_k = 1-\tilde\zeta_{m_2-k+1}^2$ (since $(1-\tilde\zeta_{k}^2)_{k=1}^{m_2}$ and $(\beta'_k)_{k=1}^{m_2}$ are in opposite order).
\end{proof}

Suppose now that there exists $k_0$ such that $\xi_{k_0}(t) = 0$ for all $t \in [0,1)$. The asymptotics at $t = 1$ from \Cref{lem:asymptotics_at_0_and_1} then imply $\cos(\frac{2k_0-1+\epsilon_1}{2m_1+\epsilon_1+\epsilon_2} \pi) = 0$. This is only possible if $\epsilon_1 = \epsilon_2$, $2 \nmid m_1$ and $k_0 = \frac{m_1+1}{2}$.

On the other hand, $\xi_{k_0}(0) = -1 + 3 \zeta_{k_0}^2 = 0$ means that the $k_0^{th}$ positive zero of $P^{2m_2 + \epsilon_2+\epsilon_3}_{2|m|+\epsilon|}$ falls on $\frac{1}{\sqrt{3}}$. As already mentioned in the introduction, Mangoubi and Weller Weiser \cite{MangoubiWellerWeiser} conjectured that this never occurs for \emph{any} zero of this polynomial, except in three cases which in our setup correspond to $m = (1,0)$, $\epsilon = (0,0,0)$, $m = (1,0)$, $\epsilon = (1,1,1)$ and $m = (1,1)$, $\epsilon = (1,0,0)$. This claim boils down to a difficult number-theoretic problem, which is substantially reduced, but not solved, in \cite{MangoubiWellerWeiser}. A proof of this conjecture, together with the preceding discussion, would immediately imply \Cref{thm:main}.

We will instead use the assumption that $\xi_{k_0}(t) = 0$ for all $t \in [0,1)$ to restrict the range of possible $m$ and $\epsilon$, by taking a derivative at $t=1$, and then show that for these parameters, $\xi_{k_0}(0) = -1 + 3 \zeta_{k_0}^2 \neq 0$, unless $|m|=1$.

\subsubsection{Three dimensions: Endpoint calculations at $t=1$.}
As $t \to 1$, the parameter $a(t)$ becomes unbounded, and so does $\xi(t)$. However, $(1-t)a(t)$ is analytic across $t=1$, hence \Cref{lem:analytic_continuation} and the scaling invariance of Niven's equations imply that $\xi(t)$ merely has a simple pole at $t=1$. Since $\xi_k(t)$ stays bounded for $k = 1,\ldots,m_1$, it must analytically continue across $t=1$.

For $k = m_1 + 1,\ldots,|m|$, let $\tilde \xi_k(t) = (1-t)\xi_k(t)$, which is analytic across $t=1$ as well. Define 
\begin{align}
\label{eq:reduced_Niven}
\begin{split}
    F&: \mathbb R^{m_1} \times [0,1] \to \mathbb R^{m_1}, \\
    F(y)_k &= \frac{1+2\epsilon_1}{y_k + 1} + \frac{1+2\epsilon_2}{y_k - \frac{2}{1+t}}  + \frac{(1-t)(1+2\epsilon_3)}{(1-t)y_k - 2} \\
    &+ \sum_{\ell = m_1 + 1}^{|m|} \frac{4(1-t)}{y_k(1-t) - \tilde\xi_\ell(t)} + \sum_{\ell\neq k} \frac{4}{y_k - y_\ell}.
\end{split}
\end{align}
The point of this definition is that $y(t) := (\xi_k(t))_{k=1}^{m_1}$ solves $F(y(t),t)=0$, and $F$ is analytic on a neighborhood of $(y(1),1) \in \mathbb R^{m_1} \times \mathbb R$, unlike the original system of equations (\ref{eq:Niven}). This allows us to calculate $y'(1)$ by the chain rule, via $y'(1) = -D_y F(y(1),1)^{-1} D_t F(y(1)),1)$. We first compute
\begin{align*}
    D_t F(y,1)_k &= -\frac{1 + 2\epsilon_2}{2(y_k - 1)^2} + \frac{1+2\epsilon_3}{2} + \sum_{\ell = 1}^{m_2} \frac{4}{\tilde \xi_{m_1+\ell}(1)} \\
    D_y F(y,1)_{k\ell} &= \begin{cases}
        - \frac{1 + 2 \epsilon_1}{(y_k + 1)^2} - \frac{1 + 2 \epsilon_2}{(y_k - 1)^2} -  \sum\limits_{\ell\neq k} \frac{4}{(y_k - y_\ell)^2}, & k = \ell \\
        \frac{4}{(y_k - y_\ell)^2}, & k \neq \ell
    \end{cases}
\end{align*}
The matrix $D_y F(y,1)_{k\ell}$ is negative definite by the same argument as in the proof of \Cref{lem:analytic_continuation}, and hence invertible.

 Recall that according to (\ref{eq:asymptotics_1_m2}), $\tilde \xi_{m_1+\ell}(1) = 2 - 2\tilde \zeta^2_{m_2-\ell+1}$ is the $(m_2-\ell+1)^{th}$ positive zero of $P^{2m_1 + \epsilon_1+\epsilon_2}_{2|m|+|\epsilon|}$. Since $P^{2m_1 + \epsilon_1+\epsilon_2}_{2|m|+|\epsilon|}$ has a zero at $0$ if and only if $\epsilon_3$ is odd, and since its zeroes lie symmetric with respect to the origin,
 \begin{align*}
     \frac{\epsilon_3}{2} + \sum_{\ell = 1}^{m_2} \frac{2}{\tilde \xi_{m_1+\ell}(1)} = \sum_{\zeta} \frac{1}{2 - 2 \zeta^2},
 \end{align*}
 where the latter sum ranges over all zeroes of $P^{2m_1 + \epsilon_1+\epsilon_2}_{2|m|+|\epsilon|}$ not equal to $\pm 1$. Let us calculate this sum for an arbitrary associated Legendre polynomial $P^\nu_\mu$, using Rodrigues' formula $P^\nu_\mu(x) = \frac{1}{2^\mu \mu!}(1-x^2)^{\frac{\nu}{2}} \left(\frac{d}{dx}\right)^{\mu+\nu} (x^2-1)^{\mu}$ (see e.g.\ \cite[Equations 8.6.6 and 8.6.18]{AbramowitzStegun1964}). The zeroes of interest are those of the polynomial $p(x) := \left(\frac{d}{dx}\right)^{\mu+\nu} (x^2-1)^{\mu}$. It follows by Vieta's formula that
 \begin{align*}
     \sum_{\zeta} \frac{1}{2-2\zeta^2} = \frac14 \sum_{\zeta} \frac{1}{1+\zeta} + \frac14 \sum_{\zeta} \frac{1}{1-\zeta} = \frac12  \sum_{\zeta} \frac{1}{1-\zeta} = \frac12  \frac{p'(1)}{p(1)},
 \end{align*}
 since the zeros of $p$ lie symmetric around $0$, and $1-\zeta$ ranges over the zeroes of $p(1-x)$. As $p(1) = \binom{\mu+\nu}{\nu} \frac{\mu! \mu!}{(\mu-\nu)!}2^{\mu-\nu}$ and $p'(1) = \binom{\mu+\nu+1}{\nu+1} \frac{\mu! \mu!}{(\mu-\nu-1)!}2^{\mu-\nu-1}$ by the product rule, we obtain $\sum \frac{1}{2-2\zeta^2} = \frac{(\mu-\nu)(\mu+\nu+1)}{4(\nu+1)}$. Hence,
 \begin{align*}
     D_t F(y,1)_k &= -\frac{1 + 2\epsilon_2}{2(y_k - 1)^2} + K, \\ K :\!&= \frac12 + \frac{(2m_2+\epsilon_3)(4m_1+2m_2+2\epsilon_1+2\epsilon_2+\epsilon_3+1)}{2(2m_1+\epsilon_1+\epsilon_2+1)}.
 \end{align*}

The explicit calculation of $y'(1)$ is enabled by two arithmetic miracles, which we shall isolate in a lemma.

%, and demystify in a subsequent remark. WRITE UP!

 \begin{lemma}
 \label{lem:miracle}
    Let $\delta \in \mathbb R_{>0}$, and suppose $\xi \in \mathbb R^n$ solves
    \begin{align*}
        \frac{\delta}{\xi_k-1} + \frac{\delta}{\xi_k+1} + 4\sum_{\ell \neq k} \frac{1}{\xi_k - \xi_\ell} = 0.
    \end{align*}
    Let $A \in \mathbb R^{n\times n}$ be given by
    \begin{align*}
        A_{k\ell} &= \begin{cases} 4 \sum_{\ell' \neq k} (\xi_k-\xi_{\ell'})^{-2} + \delta (\xi_k-1)^{-2} + \delta (\xi_k+1)^{-2} & k = \ell, \\ -4(\xi_k-\xi_\ell)^{-2} & k\neq \ell, \end{cases}
    \end{align*}
    and $v,w \in \mathbb R^n$ by $v = \left(\frac{1+\xi_k}{2\delta}\right)_{k=1}^n$ and $w = \left(\frac{1-\xi_k^2}{4(n-1)+2\delta}\right)_{k=1}^n$. Then
    \begin{align*}
        Av &= \left( (\xi_k-1)^{-2} \right)_{k=1}^n, & Aw &= \left(1\right)_{k=1}^n.
    \end{align*}
\end{lemma}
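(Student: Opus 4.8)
The plan is to verify both identities by a single direct computation carried out row by row, using that $A$ is exactly the negative of the Jacobian at $\xi$ of the map $G$ with $G(\xi)_k=\frac{\delta}{\xi_k-1}+\frac{\delta}{\xi_k+1}+4\sum_{\ell\neq k}\frac{1}{\xi_k-\xi_\ell}$, so that the $k$-th row of $A$ interacts nicely with the hypothesis $G(\xi)=0$. Since $v$ and $w$ are handed to us, nothing needs to be guessed --- the substance of the lemma is the cancellation that produces the clean right-hand sides, and the proof simply exhibits it. (As motivation I would note that $2\delta v$ is the velocity of $\xi$ under the affine motion that slides the endpoint $1$, which predicts the first identity; I would not actually need this.)

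For $Av$ I would expand $(Av)_k=A_{kk}v_k+\sum_{\ell\neq k}A_{k\ell}v_\ell$ and merge the off-diagonal terms with the summand $4\sum_{\ell\neq k}(\xi_k-\xi_\ell)^{-2}$ of $A_{kk}$. Because $2\delta v_\ell=1+\xi_\ell$, the numerator $(1+\xi_k)-(1+\xi_\ell)=\xi_k-\xi_\ell$ kills one power of $\xi_k-\xi_\ell$, so this block collapses to $\frac{1}{2\delta}\cdot 4\sum_{\ell\neq k}\frac{1}{\xi_k-\xi_\ell}$, which the hypothesis rewrites as $-\frac12\big(\frac{1}{\xi_k-1}+\frac{1}{\xi_k+1}\big)$. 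The remaining diagonal part is $\frac{1+\xi_k}{2}\big(\frac{1}{(\xi_k-1)^2}+\frac{1}{(\xi_k+1)^2}\big)$; collecting the $(\xi_k+1)$-terms they cancel identically, and the $(\xi_k-1)$-terms combine to $\frac{(1+\xi_k)-(\xi_k-1)}{2(\xi_k-1)^2}=(\xi_k-1)^{-2}$, as claimed.

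For $Aw$ the same manoeuvre applies with $Cw_\ell=1-\xi_\ell^2$, $C:=4(n-1)+2\delta$. Here $(1-\xi_k^2)-(1-\xi_\ell^2)=-(\xi_k-\xi_\ell)(\xi_k+\xi_\ell)$, so the merged block becomes $-4\sum_{\ell\neq k}\frac{\xi_k+\xi_\ell}{\xi_k-\xi_\ell}$; writing $\frac{\xi_k+\xi_\ell}{\xi_k-\xi_\ell}=\frac{2\xi_k}{\xi_k-\xi_\ell}-1$ splits this as $-8\xi_k\sum_{\ell\neq k}\frac{1}{\xi_k-\xi_\ell}+4(n-1)$, and the hypothesis turns the first piece into $2\delta\xi_k\big(\frac{1}{\xi_k-1}+\frac{1}{\xi_k+1}\big)=4\delta+\frac{2\delta}{\xi_k-1}-\frac{2\delta}{\xi_k+1}$. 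The leftover diagonal part is $\delta(1-\xi_k^2)\big(\frac{1}{(\xi_k-1)^2}+\frac{1}{(\xi_k+1)^2}\big)=-\delta\frac{1+\xi_k}{\xi_k-1}+\delta\frac{1-\xi_k}{\xi_k+1}=-2\delta-\frac{2\delta}{\xi_k-1}+\frac{2\delta}{\xi_k+1}$. Summing, every term containing $\xi_k$ cancels --- this is the ``arithmetic miracle'' --- leaving $C(Aw)_k=4\delta-2\delta+4(n-1)=C$, i.e.\ $(Aw)_k=1$.

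I do not expect a genuine obstacle here: nothing uses an ordering of the $\xi_k$ or any property of $\xi$ beyond solving the displayed equation, and the only place to go wrong is the bookkeeping in splitting $\frac{p(\xi_k)-p(\xi_\ell)}{(\xi_k-\xi_\ell)^2}$ into $\frac{1}{\xi_k-\xi_\ell}$ times lower-order terms and in tracking signs. In the final write-up I would render the two computations as two short self-contained displays, each a few lines, flagging the use of the hypothesis explicitly.
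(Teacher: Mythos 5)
Your computation is correct and is essentially the paper's own proof: both verify the two identities row by row by direct substitution, collapsing the off-diagonal terms against the $4\sum_{\ell'\neq k}(\xi_k-\xi_{\ell'})^{-2}$ part of the diagonal via the telescoping numerators $\xi_k-\xi_\ell$ and $-(\xi_k-\xi_\ell)(\xi_k+\xi_\ell)$, and then invoking the hypothesis once per row to eliminate the sum $\sum_{\ell\neq k}(\xi_k-\xi_\ell)^{-1}$. The bookkeeping in your two displays checks out, so no changes are needed.
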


\begin{proof}
Both identities follow from direct calculation. In these computations, we first plug in the given expressions for $A$ and $v$ (resp. $w$), and then simplify them using the equation we assumed is satisfied by $\xi$.
\begin{align*}
    (Av)_k &= \frac{1}{2\delta}
    \left( 4 \sum_{\ell \neq k} (\xi_k - \xi_{\ell})^{-1} + \delta(\xi_k + 1)(\xi_k - 1)^{-2} + \delta(\xi_k + 1)^{-1} \right) \\
    & = \frac{1}{2}
    \left( (\xi_k + 1)(\xi_k - 1)^{-2} - (\xi_k - 1)^{-1} \right) = (\xi_k - 1)^{-2}, \\
    (Aw)_k &= \frac{1}{4(n-1)+2\delta}
    \left( 4 \sum_{\ell \neq k} \frac{\xi_\ell + \xi_k}{\xi_\ell - \xi_k} + \delta \frac{1+\xi_k}{1-\xi_k} + \delta \frac{1-\xi_k}{1+\xi_k}\right) \\
    &= \frac{1}{4(n-1)+2\delta}
    \left( 4(n-1) + \sum_{\ell \neq k} \frac{8\xi_k}{\xi_\ell - \xi_k} + 2\delta + \frac{2\xi_k \delta}{1-\xi_k} - \frac{2\xi_k \delta}{1+\xi_k}\right) \\
    &= 1.
\end{align*}
Perhaps surprisingly, the explicit expressions for $\xi_k$ derived in \Cref{sec:3d_1} played no role in our calculations.
\end{proof}

\begin{comment}
\begin{remark}
\label{rem:trucatedHilbertTransform}
    \textcolor{red}{Say something about truncated Hilbert transform.}
\end{remark}    
\end{comment}

Because the assumption $\xi_{k_0}(1) = 0$ implied $\epsilon_1 = \epsilon_2$ (\Cref{sec:3d_1}), we can apply \Cref{lem:miracle} to obtain
\begin{align*}
    \xi_k'(1) = - \frac{1+\xi_k(1)}{4} + K \frac{1-\xi_k(1)^2}{4m_1 + 4 \epsilon_1 - 2}.
\end{align*}
Plugging the expression for $K$ (and recalling that $\epsilon_1 = \epsilon_2$), as well as our assumption that $\xi_{k_0}(1) = \xi'_{k_0}(1) = 0$, yields the equation
\begin{align}
    \begin{split}
    &(2m_1 +2\epsilon_1 - 1)(2m_1 + 2\epsilon_1 + 1) = 2K(2m_1 + 2\epsilon_1 + 1) \\ = \ &2m_1 + 2\epsilon_1 + 1 + (2m_2 + \epsilon_3)(4m_1 + 2m_2 + 4\epsilon_1 + \epsilon_3 + 1).
    \end{split}
\end{align}
After simplifying, we obtain the Diophantine equation
\begin{align}
\label{eq:pell}
    (2|m|+|\epsilon|)^2 + (2|m|+|\epsilon|) - 8(m_1+\epsilon_1)^2 + 2 = 0.
\end{align}
Its solutions are given by $2|m|+|\epsilon| = \frac{1}{2}(\sqrt{32(m_1+\epsilon_1)^2-7}-1)$, whenever $32(m_1+\epsilon_1)^2-7$ is a square. Since $\xi_{k_0}(1) = 0$ implied $m_1$ was odd, $m_1 + \epsilon_1$ determines both $m_1$ and $\epsilon_1$. The Pell-type equation $p^2 - 32 q^2 = -7$ admits infinitely many solutions, of which the smallest one, $(p,q) = (5,1)$, leads to $m=(1,0)$, $\epsilon=(0,0,0)$. The remaining non-negative solutions of this equation (obtained via \cite[Theorem 3.3]{ConradPellII}) lead to the following formula for $m$ and $\epsilon$, where $\psi = 17+12\sqrt{2}$ and $(\alpha,\sigma)$ ranges over $\mathbb N_{\geq 1} \times \{-1,1\}$:
\begin{align}
\label{eq:solution_formula}
\begin{split}
    m_1 + \epsilon_1 &= \frac{(4\sqrt{2} + \sigma 5)\psi^\alpha + (4\sqrt{2} - \sigma 5)\psi^{-\alpha}}{8\sqrt{2}}\\
    2|m|+|\epsilon| &= \frac{(4\sqrt{2} + \sigma 5)\psi^\alpha - (4\sqrt{2} - \sigma 5)\psi^{-\alpha}-2}{4}
\end{split}
\end{align}
One more ``small'' solution, namely $m=(1,0)$, $\epsilon=(1,1,1)$, arises from $(\alpha,\sigma) = (1,-1)$. All remaining solutions satisfy $m_1 + \epsilon_1 \geq 32$ and $2|m|+|\epsilon| \geq 90$, which will be important in the next step.

Later, we will need an upper bound for the quotient $\frac{m_1+\epsilon_1+1}{4|m|+2|\epsilon|}$ valid for all solutions of \eqref{eq:pell} with $|m| > 1$. For fixed $\sigma \in \{-1,1\}$, $\psi^{-\alpha}(m_1+\epsilon_1+1)$ decreases with $\alpha$, while $\psi^{-\alpha}(4|m|+2|\epsilon|)$ increases. Thus, $\frac{m_1+\epsilon_1+1}{4|m|+2|\epsilon|}$ decreases in $\alpha$ as well, and so is bounded above by the value taken at the smallest solutions with $|m| > 1$ corresponding to $\sigma = 1$ and $\sigma = -1$, respectively. These are $(m_1+\epsilon_1,2|m|+|\epsilon|) = (32,90)$ and $(67,189)$. Hence,
\begin{align}
    \label{eq:upper_bound_for_quotient}
    \frac{m_1+\epsilon_1+1}{4|m|+2|\epsilon|} \leq \max \left( \frac{33}{180}, \frac{68}{378}\right) = \frac{33}{180}.
\end{align}

\subsubsection{Three dimensions: Endpoint calculations at $t=0$.}

It remains to close the argument by showing that the $k_0^{th}$ positive zero of $P^{2m_2+\epsilon_2+\epsilon_3}_{2|m|+|\epsilon|}$, denoted by $\zeta_{k_0}$, does not fall on $\frac1{\sqrt{3}}$ when $\xi_{k_0}(1)=\xi_{k_0}'(1) = 0$. As derived in the previous subsection, the latter conditions imply $\epsilon_1=\epsilon_2$, $k_0 = \frac{m_1+1}{2}$, and that $m$, $\epsilon$ are given by (\ref{eq:solution_formula}), which we shall therefore assume for the remainder of this subsection. After excluding the two trivial solutions with $m=(1,0)$ we may assume $2|m|+|\epsilon| \geq 90$.

We will use an approximation formula (\Cref{cor:zeros}) for zeros of associated Legendre polynomials proven in \Cref{sec:WKB} below.
%From (\ref{eq:solution_formula}) it follows that $\psi^{-\alpha}m_1$ decreases and $\psi^{-\alpha}(2m_2 + \epsilon_3)$ increases with $\alpha$, respectively.
Let us introduce the auxiliary quantities
\begin{align*}
    h = \frac{1}{\sqrt{(2|m|+|\epsilon|)(2|m|+|\epsilon|+1)}}, \ 
    \nu = \frac{2m_2+\epsilon_2+\epsilon_3}{\sqrt{(2|m|+|\epsilon|)(2|m|+|\epsilon|+1)}}
\end{align*}
and $\eta = \sqrt{1-\nu^2}$. Under our assumptions, $h \leq \frac{1}{\sqrt{8190}} \sim 0.011$.
Equation (\ref{eq:pell}) suggests $\nu \sim 1-\frac{1}{\sqrt{2}}$. In fact, we can use (\ref{eq:pell}) to bound $\nu$ from above unconditionally (recall that $\epsilon_1=\epsilon_2$):
\begin{align*}
    \nu &\leq \frac{2m_2+\epsilon_3+1}{\sqrt{(2|m|+|\epsilon|)(2|m|+|\epsilon|+1)}} = \frac{2|m|+|\epsilon| + 1 - 2 m_1 - \epsilon_1 - \epsilon_2}{\sqrt{(2|m|+|\epsilon|)(2|m|+|\epsilon|+1)}} \\ &< 1 + h - \frac{2 (m_1 + \epsilon_1)}{\sqrt{(2|m|+|\epsilon|)(2|m|+|\epsilon|+1)}} \leq 1 - \frac{1}{\sqrt{2}}+h
\end{align*}
 Thus, $\nu \leq 1 - \frac{1}{\sqrt{2}}+\frac{1}{\sqrt{8190}} < 0.304$. By \Cref{cor:zeros}, proven in \Cref{sec:WKB}, the $k^{th}$ zero $\zeta_k$ of $P^{2m_2+\epsilon_2+\epsilon_3}_{2|m|+|\epsilon|}$ is well approximated by the (unique) solution $x \in (0,1)$ of the equation
\begin{align}
    \label{eq:approximation_formula_in_use}
        \tan^{-1}\left(\tfrac{\frac{x}{\eta}}{\sqrt{1-(\frac{x}{\eta})^2}}\right) - \nu \, \tan^{-1}\left(\nu \, \tfrac{\frac{x}{\eta}}{\sqrt{1-(\frac{x}{\eta})^2}}\right) = h \left(k-\tfrac{1+(-1)^{\epsilon_1}}{4}\right) \pi.
\end{align}
Denote the expression on the left hand side by $\vartheta(x)$. By the error estimate provided in \Cref{cor:zeros}, \Cref{sec:WKB}, $\vartheta(\zeta_k)$ differs from $h \left(k-\tfrac{1+(-1)^{\ell-m}}{4}\right) \pi$ by at most $\frac h2(2m_1 + \epsilon_1 - 2k)^{-1}$. For fixed $x$, $\vartheta(x)$ is decreasing in $\nu$, hence
\begin{align*}
    \vartheta\left(\frac{1}{\sqrt{3}}\right) \geq 0.5818\ldots,
\end{align*}
the value obtained by plugging in the upper bound $0.304$ for $\nu$.
On the other hand, the upper bound \eqref{eq:upper_bound_for_quotient} for $\frac{m_1+\epsilon_1 + 1}{4|m|+2|\epsilon|}$ implies
\begin{align*}
    h \left(k_0-\tfrac{1+(-1)^{\epsilon_1}}{4}\right) \pi < \frac{m_1+\epsilon_1 + 1}{4|m|+2|\epsilon|} \pi \leq \frac{33}{180}\pi = 0.5759\ldots
\end{align*}

The difference between these numbers is around $0.0059$, which is small but significant: It is greater than $\frac h2(2m_1 + \epsilon_1 - 2k_0)^{-1}$, which does not exceed $0.002$. Thus, the $k_0^{th}$ positive zero of $P^{2m_2+\epsilon_2+\epsilon_3}_{2|m|+|\epsilon|}$ does \emph{not} fall on $\frac1{\sqrt{3}}$, but on a slightly smaller number. This completes the proof of \Cref{thm:main} in three dimensions, contingent on the error bound for the approximation formula (\ref{eq:approximation_formula_in_use}), which will be proved in \Cref{sec:WKB}.

\subsubsection{Higher dimensions.}

The higher dimensional case follows by induction on $d$, with the base case $d=3$ established above. Fix a connected component $\widetilde{\mathcal A}^d_0 \subseteq \widetilde{\mathcal A}^d$, and suppose $m,\epsilon$ and $k_0$ are such that $\big(\xi_{\epsilon,m}^a\big)_{k_0} \equiv 0$ on $\widetilde{\mathcal A}^d_0$. Let $j_0 \in {1,\ldots,d-1}$ be the index such that $a_{j_0}< 0 < a_{j_0+1}$ on $\widetilde{\mathcal A}^d_0$. Since $d \geq 4$, we may assume $j_0 \not\in \{1,d-1\}$.

Consider $m' = (m_1,\ldots,m_{d-2})$, $\epsilon'=(\epsilon_1,\ldots,\epsilon_{d-1})$. Suppose it is not the case that $|m'|=1$ and $\epsilon' = \{0,\ldots,0\}$ or $\{1,\ldots,1\}$. The induction hypothesis implies that there exists $a' \in \widetilde {\mathcal A}^{d-1}$, which can be assumed to lie on the boundary of $\widetilde{\mathcal A}^d_0$ in the sense that $a'_{j_0-1} < 0 < a'_{j_0}$, for which no component of $\xi_{m',\epsilon'}^{a'}$ vanishes. Since both $a_{d-1}$ and $a_d$ are positive, we can find an analytic curve $a: [0,1) \to \widetilde{\mathcal A}^d_0$ which has a simple pole at $t=1$ where
\begin{align*}
    \lim_{t \to 1} a_j(t) = \begin{cases}
        a'_j & j \neq d \\
        \infty & j = d
    \end{cases}
\end{align*}
It follows that $\lim_{t \to 1} \big(\xi_{m,\epsilon}^a(t)\big)_k = \xi_{m',\epsilon'}^{a'} \neq 0$ for $k \in \{1,\ldots,m_{d-2}^\#\}$, and $\lim_{t \to 1} \big(\xi_{m,\epsilon}^a(t)\big)_k = \infty$ for $k \in \{m_{d-2}^\#+1, \ldots, |m|\}$, a contradiction. Thus, $|m'| = 1$ and $\epsilon'=(\epsilon_1,\ldots,\epsilon_{d-1})$. In particular, $m_{j_0} = 1$ and $m_{j} = 0$ for all $j\neq j_0$.

Completely analogously, we can also send $a_1$ to $-\infty$ and thereby establish that $|(m_2,\ldots,m_{d-1})|=1$ and $(\epsilon_2,\ldots,\epsilon_d) = \{0,\ldots,0\}$ or $\{1,\ldots,1\}$. This completes the proof of \Cref{thm:main}. \qed

\section{WKB approximation of the associated Legendre polynomials}
\label{sec:WKB}

The proof of \Cref{thm:main} required some knowledge of the approximate location of specific nontrivial zeros of associated Legendre functions, together with precise and \emph{absolute} error bounds, the latter in order to avoid missing counterexamples of small degree. In this section, we derive an approximation formula for associated Legendre polynomials, which seems to have been first discovered by Landauer in 1951 \cite{Landauer1951}. The derivation presented here differs from that in \cite{Landauer1951} and will come with explicit error bounds (\Cref{cor:better_error} and \Cref{cor:zeros}), which are absent in \cite{Landauer1951}. These bounds are valid for all parameters, contain no implicit constants and have a very simple form.

Let us introduce normalized associated Legendre polynomials $\overline{P_\ell^m}(x)$ as follows: If $\ell - m$ is even, let $\overline{P_\ell^m}(x) = P_\ell^m(x)/P_\ell^m(0)$, while for $\ell - m$ odd, $\overline{P_\ell^m}(x) = \sqrt{l(l+1)-m^2} \, P_\ell^m(x)/(P_\ell^m)'(0)$. This normalization has the effect that the local extrema of $\overline{P_\ell^m}(x)$ nearest to the origin take values close to $\pm 1$. In fact, $\overline{P_\ell^m}(x)$ is almost perfectly enveloped by $\pm \sqrt[4]{\frac{\ell(\ell+1)-m^2}{\ell(\ell+1)(1-x^2)-m^2}}$, as we will show in \Cref{prop:WKB_approximation} below.

\begin{proposition}
\label{prop:WKB_approximation}
Let $0 \leq m \leq \ell$ be integers, and introduce $h = \frac{1}{\sqrt{\ell(\ell+1)}}$, $\nu = \frac{m}{\sqrt{l(l+1)}}$, $\eta = \sqrt{1-\nu^2}$. Let
\begin{align}
    \label{eq:phase}
    \vartheta(x) = \tan^{-1}\left(\frac{\frac{x}{\eta}}{\sqrt{1-(\frac{x}{\eta})^2}}\right) - \nu \, \tan^{-1}\left(\nu \, \frac{\frac{x}{\eta}}{\sqrt{1-(\frac{x}{\eta})^2}}\right).
\end{align}On the interval $(-\eta,\eta)$, the normalized associated Legendre function $\overline{P_\ell^m}(x)$ is approximated well by 
    \begin{align}
    \label{eq:approximation}
    W^m_\ell(x) = \begin{cases}
        \left(1-(\tfrac{x}{\eta})^2\right)^{-\frac14} \cos\left( \tfrac{1}{h}\vartheta(x) \right) & \text{ if } \ell-m \text{ is even, and} \\
        \left(1-(\tfrac{x}{\eta})^2\right)^{-\frac14} \sin\left( \tfrac{1}{h}\vartheta(x) \right) & \text{ if } \ell-m \text{ is odd,} 
    \end{cases}
    \end{align}
    with error at most $\frac12 h\eta^{-3}|x|(1-x^2)(1-(\frac{x}{\eta})^2)^{-\frac74}$ in both cases.
\end{proposition}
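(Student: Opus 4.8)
The plan is to verify that $W^m_\ell$ is an approximate solution of the same ordinary differential equation satisfied by $\overline{P^m_\ell}$, and then convert the pointwise defect of $W^m_\ell$ into a genuine error bound via the variation-of-constants formula (a Duhamel/Gronwall-type argument). First I would recall that $v(x) := \overline{P^m_\ell}(x)$ satisfies the associated Legendre equation, which after clearing denominators reads $\left((1-x^2)v'\right)' + \left(\ell(\ell+1) - \frac{m^2}{1-x^2}\right)v = 0$; dividing by $\ell(\ell+1)$ and writing $h^2 = \frac{1}{\ell(\ell+1)}$, $\nu = m h$, this becomes a semiclassical equation $h^2\left((1-x^2)v'\right)' + \left(1 - \frac{\nu^2}{1-x^2}\right)v = 0$ with turning points exactly at $x = \pm\eta$, $\eta = \sqrt{1-\nu^2}$. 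The function $\vartheta$ in \eqref{eq:phase} is, up to the factor $1/h$, precisely the WKB phase $\int \sqrt{Q}\,\frac{dx}{1-x^2}$ where $Q(x) = 1 - \frac{\nu^2}{1-x^2}$ (this can be checked by differentiating: $\vartheta'(x) = \frac{\sqrt{1-\nu^2}\,\eta^{-1}}{(1-(x/\eta)^2)^{1/2}} \cdot \frac{\text{something}}{1-x^2}$, i.e.\ $\vartheta'(x) = \frac{1}{1-x^2}\sqrt{1 - \frac{\nu^2}{1-x^2}}$ after simplification), and the prefactor $(1-(x/\eta)^2)^{-1/4}$ is the standard amplitude $Q^{-1/4}$ adapted to the weight $(1-x^2)$.

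Next I would compute $L W^m_\ell$, where $L v := h^2\left((1-x^2)v'\right)' + \left(1 - \frac{\nu^2}{1-x^2}\right)v$, and show that the leading and subleading terms cancel by the choice of phase and amplitude, leaving a remainder of size $O(h^2)$ times an explicit algebraic factor — this is the usual ``WKB error is the second derivative of the amplitude'' phenomenon. Concretely, one finds $L W^m_\ell = h^2 R(x)$ where $R(x)$ involves $\left(Q^{-1/4}\right)''$ and its product with $1-x^2$, and a direct (if tedious) computation gives $|R(x)| \leq \tfrac12 \eta^{-3}|x|(1-x^2)^{-1}\left(1-(x/\eta)^2\right)^{-7/4}$ or similar; I would organize this so that the algebra stays manageable by working with the variable $y = x/\eta$. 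Then, since both $v = \overline{P^m_\ell}$ and $W^m_\ell$ satisfy the same initial conditions at $x=0$ up to the chosen normalization (value and derivative matching in the even/odd cases respectively — this is exactly why the normalization of $\overline{P^m_\ell}$ was set up as it was, and why $\cos$ appears for $\ell-m$ even and $\sin$ for $\ell-m$ odd), the difference $e := v - W^m_\ell$ satisfies $Le = -h^2 R$ with zero initial data, and Duhamel's formula against the two WKB solutions (whose Wronskian in the weight $(1-x^2)$ is constant and computable) expresses $e(x)$ as an integral of $R$ against a bounded kernel. Estimating this integral yields the stated bound $\tfrac12 h \eta^{-3}|x|(1-x^2)\left(1-(x/\eta)^2\right)^{-7/4}$; the gain of one power of $h$ relative to $h^2 R$ comes from the single integration, and the fact that the amplitude and kernel are themselves $O(1)$ controlled on $(-\eta,\eta)$ keeps the constant at exactly $1/2$.

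The main obstacle I anticipate is twofold. First, getting the constant and the precise algebraic form of the error exactly right — the paper explicitly needs \emph{absolute} bounds with no hidden constants in order to rule out small-degree counterexamples in \Cref{sec:proof}, so I cannot be cavalier with the $O$-notation; I would need to carry the computation of $\left(Q^{-1/4}\right)''$ and of the Duhamel kernel through without dropping factors, most likely by substituting $y = x/\eta$ and then a trigonometric substitution $y = \sin\phi$ so that $\vartheta$ becomes an explicit function of $\phi$ and the integral becomes elementary. Second, one must be careful near the turning points $x = \pm\eta$: the bound blows up like $(1-(x/\eta)^2)^{-7/4}$ there, which is fine for the application (the relevant zeros $\zeta_{k_0}$ sit well inside the interval, near $1/\sqrt3$, with $\nu$ small so $\eta$ close to $1$), but it means the Duhamel estimate has to be set up as an integral from $0$ outward with the singularity only at the endpoint, and one should check the integral of $R$ against the kernel actually converges and produces the claimed power $-7/4$ rather than something worse. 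I would handle this by noting that near $x=0$ everything is smooth and the dangerous behaviour is entirely at the far endpoint, where the integrand is still integrable after the single quadrature. Once \Cref{prop:WKB_approximation} is in hand, \Cref{cor:zeros} — locating the zeros of $\overline{P^m_\ell}$ as the near-solutions of $\vartheta(x) = h(k - \tfrac{1+(-1)^{\ell-m}}{4})\pi$ with the quoted error — follows by a standard monotonicity-and-intermediate-value argument applied to the approximate phase.
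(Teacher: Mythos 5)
Your skeleton matches the paper's: identify \eqref{eq:phase} as the Liouville--Green phase and $(1-(x/\eta)^2)^{-1/4}$ as the amplitude $(pq)^{-1/4}$ for the Sturm--Liouville form of the Legendre equation, match value/derivative data at $x=0$ (your observation about why $\cos$ vs.\ $\sin$ and the chosen normalization line up is correct), and convert the defect of $W^m_\ell$ into an error bound by variation of constants. The genuine gap is in the last step. You propose to use Duhamel's formula ``against the two WKB solutions''; but those are not exact solutions of the operator, so variation of constants with them as kernel does not give a closed-form expression for the error --- it gives a Volterra integral equation in which the error appears on both sides. Closing that equation by the Gronwall/successive-approximation route you allude to produces a bound of the form $\exp(V)-1$ for a total-variation quantity $V$, not the clean constant $\tfrac12$ and the specific profile $|x|(1-x^2)\bigl(1-(x/\eta)^2\bigr)^{-7/4}$ that the proposition asserts and that the numerics in \Cref{sec:proof} actually require. (A smaller slip: your displayed $\vartheta'(x)$ is off by a factor $(1-x^2)^{-1/2}$; the correct identity is $\vartheta'(x)=\sqrt{\eta^2-x^2}/(1-x^2)$.)

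The paper closes this step with two specific devices you would need. First, it substitutes $x=\tanh t$ to put the equation in Schr\"odinger form $v''+h^{-2}p(t)^2v=0$ with $p(t)=\sqrt{\cosh(t)^{-2}-\nu^2}$, and then uses Duhamel with the \emph{exact} solution $\Phi_1(\tau,t)$ as kernel, bounding it by $|\Phi_1(\tau,t)|\le h/p(t)$ via monotonicity of the energy $v'^2+h^{-2}p^2v^2$ (valid because $p'\le 0$ for $t\ge 0$); this is where the single power of $h$ and the absence of any exponential factor come from. Second, it observes that the amplitude $A=p^{-1/2}$ is convex, so $\int_0^t|A''(\tau)|\,d\tau=A'(t)-A'(0)=A'(t)=\tfrac12 x(1-x^2)(\eta^2-x^2)^{-5/4}$, which, multiplied by $h/p(t)=h(\eta^2-x^2)^{-1/2}$ and divided by $A(0)$, yields exactly the stated bound with constant $\tfrac12$. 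Without these two observations (or equivalents in the untransformed variable), your plan establishes only a qualitative $O(h)$ estimate with an unspecified constant, which is insufficient for the application.
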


\begin{figure}
    \centering
    \includegraphics[width=0.8\textwidth]{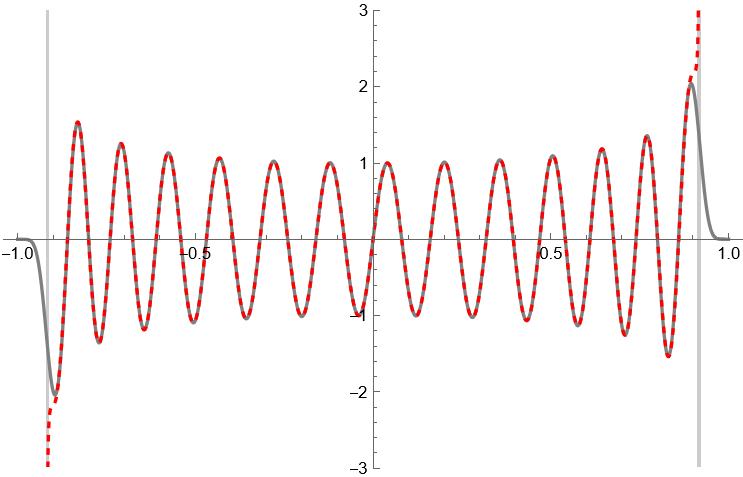}
    \caption{Plot of $\overline{P_{42}^{17}}$ (grey) and $W_{42}^{17}$ (red, dashed) as in \Cref{prop:WKB_approximation}, with vertical lines at $\pm\eta$.}
    \label{fig:legendre_approx}
\end{figure}

\begin{proof}
    Up to normalization, the associated Legendre polynomial $P_\ell^m(x)$ is the unique even (if $\ell-m$ is even) or odd (if $\ell-m$ is odd) solution on the interval $(-1,1)$ to the \emph{associated Legendre equation}  \cite[Equation 8.8.1]{AbramowitzStegun1964},
    \begin{align}
        \label{eq:associated_legendre_equation}
        (1-x^2) \, u''(x) - 2x \, u'(x) + \left(\ell(\ell+1) - \frac{m^2}{1-x^2}\right) u(x) = 0.
    \end{align}
    After a simple change of variables, we will obtain the desired asymptotic expression by Wenzel-Kramers-Brillouin approximation. 

    If $u$ solves (\ref{eq:associated_legendre_equation}), then $v(t) = u(\tanh(t))$ solves the ODE
    \begin{align}
        \label{eq:associated_legendre_equation_after_transformation}
        v''(t) + \big( \ell(\ell+1)\cosh(t)^{-2} - m \big)  v(t) = 0.
    \end{align}
    Denote $h = 1/\sqrt{l(l+1)}$, $\nu = m/\sqrt{l(l+1)}$ and $\eta = \sqrt{1-\nu^2}$ as above.
    Following the procedure detailed in \cite{Hall2013}, we now obtain the WKB approximation of the solution $v$ on the classically allowed region $\cosh(t) < \nu^{-1}$, i.e. the interval $(-\tanh^{-1}(\eta),\tanh^{-1}(\eta))$.
    Introduce $p(t) = \sqrt{\cosh(t)^{-2} - \nu^2}$, the classical momentum of a particle at position $t$, so that (\ref{eq:associated_legendre_equation_after_transformation}) can be written as
    \begin{align}
        \label{eq:associated_legendre_equation_after_transformation_2}
        v''(t) + \frac{p(t)^2}{h^2}  v(t) = 0.
    \end{align}
    For $0 \leq s \leq t < \tanh^{-1}(\eta)$, let $\Phi_0(s,t)$ and $\Phi_1(s,t)$ be the solution in $t$ of (\ref{eq:associated_legendre_equation_after_transformation_2}) with initial conditions $v(s)=1, v'(s)=0$ and $v(s)=0, v'(s)=1$, respectively. The WKB approximations to $\Phi_0(s,t)$ and $\Phi_1(s,t)$ are
    \begin{align*}
        %\label{eq:WKB_1}
        W_0(s,t) &= \frac{A(t)}{A(s)} \cos\left(\frac1h \int_s^t p(\tau) d\tau\right) \text{ and }\\
        W_1(s,t) &= hA(t)A(s) \sin\left(\frac1h \int_s^t p(\tau) d\tau \right),
    \end{align*}
    respectively, where $A(t) = p(t)^{-\frac12}$ is another auxiliary function, chosen so that $2A'(t)p(t)+A(t)p'(t)=0$. This leads to cancellations when applying $\frac{d^2}{dt^2} + \frac{p(t)^2}{h^2}$ to $W_0(s,t)$ and $W_1(s,t)$. In fact,
    \begin{align*}
        \left(\frac{d^2}{dt^2} + \frac{p(t)^2}{h^2}\right) W_0(s,t) = \frac{A''(t)}{A(s)} \cos\left(\frac1h \int_s^t p(\tau) d\tau\right),
    \end{align*}
    which stays bounded as $h \to \infty$, and the analogous term for $W_1$ is $\mathcal O(h)$.
    
    We will now obtain a precise error bound on the approximation $W_0$. The error term $E_0(s,t) := \Phi_0(s,t) - W_0(s,t)$ satisfies $E(0)=E'(0)=0$, 
    \begin{align*}
        \left(\frac{d^2}{dt^2} + \frac{p(t)^2}{h^2}\right) E_0(s,t) = \frac{A''(t)}{A(s)} \cos\left( \frac1h \int_s^t p(\tau) d\tau\right),
    \end{align*}
    and hence can be expressed via Duhamel's formula,
    \begin{align*}
        %\label{eq:duhamel}
        E_0(s,t) = \frac{1}{A(s)} \int_{s}^t \Phi_1(\tau,t) A''(\tau) \cos\left(\frac1h \int_s^{\tau} p(\tau') d\tau'\right) d\tau.
    \end{align*}
    A naive bound on $\Phi_1(s,t)$ can be obtained by an energy argument as follows. For any solution $v$ of $v'' + \frac{p^2}{h^2} v = 0$, and any $t \geq 0$,
    \begin{align*}
        \frac{d}{dt} \left( v'(t)^2 + \frac{p(t)^2}{h^2} v(t)^2 \right)
        &= 2\frac{p'(t)p(t)}{h^2} v(t)^2 \leq 0.
    \end{align*}
    Thus, $|\Phi_1(s,t)| \leq \frac{h}{p(t)}$. It is easily checked that $A(t)$ is convex. Thus,
    \begin{align*}
        |E_0(s,t)| \leq \frac{h}{p(t) A(s)} \int_{s}^t A''(\tau) d\tau = h\frac{A'(t)-A'(s)}{p(t) A(s)}.
    \end{align*}
    The analogous computation for $E_1(s,t) := \Phi_1(s,t) - W_1(s,t)$ yields
    \begin{align*}
        |E_1(s,t)| \leq h^2 A(s) \frac{A'(t)-A'(s)}{p(t)}.
    \end{align*}

    Substituting $t = \tanh^{-1}(x)$, and noting that 
        \begin{align*}
            p(t) &= \sqrt{\eta^2-x^2}, & A(t) &= (\eta^2-x^2)^{-\frac14}, & A'(t) &= \frac12 \frac{x(1-x^2)}{(\eta^2-x^2)^{\frac54}},
        \end{align*}
    our WKB approximations take the form
    \begin{align*}
        %\label{eq:WKB_2}
        W_0(0,x) &= \frac{1}{\sqrt[4]{1-(\frac{x}{\eta})^2}} \cos\left(\frac1h \int_0^x \frac{\sqrt{\eta^2-y^2}}{1-y^2} dy \right) \text{ and }\\
        W_1(0,x) &= \frac{\frac h \eta}{\sqrt[4]{1-(\frac{x}{\eta})^2}} \sin\left(\frac1h \int_0^x \frac{\sqrt{\eta^2-y^2}}{1-y^2} dy \right),
    \end{align*}
    while the error bounds become
    \begin{align*}
        |E_0(0,x)| &\leq \tfrac{1}{2} h\eta^{\frac12}\frac{|x|(1-x^2)}{(\eta^2-x^2)^{\frac74}}, & |E_1(0,x)| &\leq \tfrac{1}{2} h^2\eta^{-\frac12}\frac{|x|(1-x^2)}{(\eta^2-x^2)^{\frac74}}.
    \end{align*}
    The integral $\int_0^x \frac{\sqrt{\eta^2-y^2}}{1-y^2} dy$ can be expressed in elementary functions, yielding the expression (\ref{eq:phase}).
\end{proof}

\begin{remark}
    The change of variables $x = \tanh(t)$ arises from the observation that $(1-x^2)^2 \frac{d^2}{dx^2} - (1-x^2) 2x \frac{d}{dx}$ is the Laplace operator on the interval $(-1,1)$ equipped with the metric $\frac{1}{(1-x^2)^2}dx^2$. Its arc-length parametrization is given by $x = \tanh(t)$. The equation arising after the change of variables, 
    \begin{align*}
        v''(t) + \big( \ell(\ell+1)\cosh(t)^{-2} - m \big)  v(t) = 0,
    \end{align*}
    can be written most suggestively as $- h^2 \Delta v - \cosh(t)^{-2}v = -\nu^2 v$, using $h$ and $\nu$ as defined above. Thus, $-\nu^2 \sim -\frac{m^2}{\ell^2}$ is a negative energy level of the Schrödinger operator $-h^2 \Delta - \cosh(t)^{-2}$, while increasing $\ell$ (and hence decreasing $h \sim \frac1\ell$) corresponds to semi-classical approximation.
\end{remark}

The quality of the approximation given \Cref{prop:WKB_approximation} is demonstrated by the following corollary, which estimates the accompanying error in terms of the phase function $\vartheta(x)$.

\begin{corollary}
    \label{cor:better_error}
    Let $0 \leq m \leq \ell$ be integers, and consider $\vartheta(x)$ and $W^m_\ell(x)$ as given in \Cref{prop:WKB_approximation}. Then, for all $x \in (-\eta,\eta)$, we have the estimate
    \begin{align*}
        |W^m_\ell(x) - \overline{P_\ell^m}(x)| \leq \tfrac12 h \left( 1-(\tfrac x \eta)^2\right)^{-\frac14} (\vartheta(\eta) - \vartheta(x) )^{-1}.
    \end{align*}
\end{corollary}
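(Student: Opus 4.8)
The plan is to upgrade the crude error bound from \Cref{prop:WKB_approximation}, which has an unpleasant factor $|x|(1-x^2)(\eta^2-x^2)^{-7/4}$ that blows up near $x = \pm\eta$, into the much cleaner bound stated here by re-examining the Duhamel integral more carefully rather than estimating it at the endpoint $t = \tanh^{-1}(x)$. The key observation is that the error term $E_0(0,t)$ satisfies a Duhamel formula whose integrand involves $A''(\tau)$, and that $\int_0^x A''(\tau)\,d\tau = A'(x) - A'(0) = A'(x)$ grows like $(\eta^2-x^2)^{-5/4}$, which is what produced the bad exponent; but the oscillatory factor $\cos\!\big(\tfrac1h\int_s^\tau p\big)$ together with the decay of $\Phi_1$ should allow one to trade the pointwise bound for something controlled by the total phase accumulated \emph{after} $x$, namely $\tfrac1h(\vartheta(\eta)-\vartheta(x))$.

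First I would recall that from the proof of \Cref{prop:WKB_approximation}, $\Phi_0 = \overline{P_\ell^m}$ after the normalization, so $W^m_\ell(x) - \overline{P_\ell^m}(x) = -E_0(0,\tanh^{-1}(x))$ up to the $(1-(x/\eta)^2)^{-1/4}$-type normalization constant already folded in. Then I would go back to the energy/Duhamel argument but run it \emph{backwards from the turning point}. Consider the solution $v = \overline{P_\ell^m}$ and the WKB function $W_0(0,\cdot)$; both are real-analytic on $(-\tanh^{-1}\eta, \tanh^{-1}\eta)$, and near the turning point $t \to \tanh^{-1}(\eta)$ the momentum $p(t) \to 0$, so $A(t) \to \infty$. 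The cleaner way to see the bound: write the second-order ODE for $E_0$ in terms of the new ``time'' variable $\vartheta$ itself. Under $d\vartheta = \tfrac1h p(t)\,dt \cdot h = $ (i.e. $\vartheta' (t)= p(t)$, since $\vartheta(x) = \int_0^x \tfrac{\sqrt{\eta^2-y^2}}{1-y^2}dy$ and $dt = \tfrac{dx}{1-x^2}$), the WKB phase becomes simply $\vartheta/h$, and the prefactor $A = p^{-1/2}$ becomes the Liouville normalization. In this coordinate the equation reads $\tfrac{d^2}{d\vartheta^2}(A^{-1}v) + (\tfrac1{h^2} + \text{(small)})(A^{-1}v) = 0$ where the ``small'' term is bounded. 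One then runs Duhamel in the $\vartheta$-variable: the kernel $\Phi_1$ in $\vartheta$-time is bounded by $h$, the source term integrates to something bounded by the remaining phase length $\vartheta(\eta) - \vartheta(x)$, and the Liouville prefactor reintroduces the $(1-(x/\eta)^2)^{-1/4}$ factor. Multiplying through gives exactly $\tfrac12 h (1-(x/\eta)^2)^{-1/4}(\vartheta(\eta)-\vartheta(x))^{-1}$ — the $(\vartheta(\eta)-\vartheta(x))^{-1}$ arising because near the turning point the accumulated error from each oscillation telescopes, and one sums a tail rather than taking a worst-case pointwise value.

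Concretely I would structure the proof as: (1) translate \Cref{prop:WKB_approximation}'s pointwise bound $\tfrac12 h\eta^{1/2}|x|(1-x^2)(\eta^2-x^2)^{-7/4}$ into a statement comparing it region-by-region to $\tfrac12 h(1-(x/\eta)^2)^{-1/4}(\vartheta(\eta)-\vartheta(x))^{-1}$; (2) on the ``bulk'' region, say $|x| \leq \eta/\sqrt2$ or wherever $\vartheta(\eta) - \vartheta(x)$ is bounded below by an absolute constant, the new bound is weaker, so \Cref{prop:WKB_approximation} already suffices after checking $|x|(1-x^2)(\eta^2-x^2)^{-7/4} \lesssim \eta^{-1/2}(\eta^2-x^2)^{-1/4}$ there, which holds since $(\eta^2-x^2)^{-3/2}|x|(1-x^2)$ is bounded on that region; (3) on the region near the turning point, redo the Duhamel estimate with the integral cut at $x$ and the remaining interval expressed via $\vartheta(\eta)-\vartheta(x)$, using $|\Phi_1(\tau,t)| \leq h/p(t)$ and the convexity of $A$, but now noting $A'(t) \leq C(\vartheta(\eta)-\vartheta(t))^{-1}\cdot(\text{prefactor})$ — the crucial elementary inequality is that $\vartheta(\eta)-\vartheta(x) \sim$ (const)$\cdot \sqrt{\eta^2-x^2}\cdot\eta^{-1}$ as $x \to \eta$, obtained by differentiating and comparing, so $(\eta^2-x^2)^{-7/4}|x|(1-x^2)$ is comparable to $(\eta^2-x^2)^{-1/4}(\vartheta(\eta)-\vartheta(x))^{-1}\cdot$(bounded); (4) combine. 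The main obstacle is keeping the constant exactly $\tfrac12$ rather than merely $O(h)$: this forces one to be careful that the $\tfrac12$ already present in \Cref{prop:WKB_approximation}'s error is not degraded, which means the region-splitting in step (2)–(3) must be done so that on each piece the inequality $\text{(old bound)} \leq \text{(new bound)}$ or a direct re-derivation giving the new bound with constant $\tfrac12$ holds — I expect this to require the asymptotic $\vartheta(\eta)-\vartheta(x) = \sqrt{\tfrac{2}{\eta}}\,(\eta-x)^{1/2}(1-\eta^2)^{-1}(1+o(1))$ near the turning point, checked against $A'$, and a monotonicity check that $(\vartheta(\eta)-\vartheta(x))\cdot\eta^{1/2}|x|(1-x^2)(\eta^2-x^2)^{-3/2} \leq 1$ for all $x \in (0,\eta)$, which I would verify by reducing to a single-variable calculus inequality in $x/\eta$.
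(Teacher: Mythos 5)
Your concrete plan --- divide the error bound of \Cref{prop:WKB_approximation} by the amplitude $\bigl(1-(\tfrac x\eta)^2\bigr)^{-1/4}$ and compare the quotient pointwise with $\tfrac12 h(\vartheta(\eta)-\vartheta(x))^{-1}$ --- is exactly the paper's proof; the Liouville/Duhamel re-derivation in the $\vartheta$-variable that occupies your first two paragraphs is never needed and the paper does not perform it. The problem is that the crux of this comparison, namely the inequality
\[
\bigl(\vartheta(\eta)-\vartheta(x)\bigr)\cdot |x|\,(1-x^2)\,(\eta^2-x^2)^{-3/2}\ \le\ 1 \qquad\text{for all } x\in(0,\eta),
\]
is precisely the step you leave unproven, and the route you sketch toward it fails in three ways. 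First, this is \emph{not} a single-variable inequality in $x/\eta$: both $\vartheta$ (through the factor $1/(1-y^2)$ in its integrand $\sqrt{\eta^2-y^2}/(1-y^2)$) and the weight (through $1-x^2$) depend on $\eta$ separately from $x/\eta$, which is why a genuine case distinction comparing $1-x^2$ to $1-\eta^2$ is unavoidable. Second, the turning-point asymptotic you quote is wrong: $\vartheta(\eta)-\vartheta(x)\sim \tfrac{2}{3}\sqrt{2\eta}\,(1-\eta^2)^{-1}(\eta-x)^{3/2}$, with exponent $3/2$ rather than $1/2$; with the correct exponent the displayed product tends to $\tfrac13$ as $x\to\eta$, so the comparison does close near the turning point, but not via the identity you state. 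Third, the inequality you propose to verify carries a spurious factor $\eta^{1/2}$: since the amplitude equals $\eta^{1/2}(\eta^2-x^2)^{-1/4}$, that factor cancels, and because $\eta^{1/2}\le 1$ your version is strictly weaker than what is required, so verifying it would not finish the proof.

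For comparison, the paper closes the gap with two short integral estimates. If $x^2\ge 2\eta^2-1$, the integrand of $\vartheta(\eta)-\vartheta(x)=\int_x^\eta \sqrt{\eta^2-y^2}\,(1-y^2)^{-1}dy$ is decreasing, so inserting the factor $2y/(2x)$ and substituting $t=y^2$ gives $\vartheta(\eta)-\vartheta(x)\le (2x)^{-1}(\eta^2-x^2)^{3/2}(1-x^2)^{-1}$, hence the product above is at most $\tfrac12$. If $x^2<2\eta^2-1$, one bounds $\sqrt{\eta^2-y^2}\le\sqrt{1-y^2}$, integrates the same way, and uses $1-x^2\ge 2(1-\eta^2)$ to check $(1-x^2)^{1/2}-(1-\eta^2)^{1/2}\le(\eta^2-x^2)^{3/2}(1-x^2)^{-1}$. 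Supplying either of these arguments (or any correct proof of the displayed inequality, uniform in both $x$ and $\eta$) is what your proposal is missing.
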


\begin{proof}
    Let $e(x)= \frac12 h |x| (1-x^2)(\eta^2-x^2)^{-\frac32}$ denote the quotient of the error term given in \Cref{prop:WKB_approximation} and the amplitude $\big(1-(\frac x\eta)^2\big)^{-\frac14}$.

    The integrand in the expression $\vartheta(\eta) - \vartheta(x) = \int_x^\eta \frac{\sqrt{\eta^2-y^2}}{1-y^2} dy$ is increasing while $0 \leq y^2 \leq 2\eta^2 - 1$, and decreasing if $2\eta^2 - 1 \leq y^2 \leq \eta^2$. This suggests the following case distinction:
    
 \emph{Case 1.} $x^2 \geq 2 \eta^2-1$. In this case, the integrand is decreasing, so    \begin{align*}
        \vartheta(\eta) - \vartheta(x) &= \int_x^\eta \frac{\sqrt{\eta^2-y^2}}{1-y^2}dy
        \leq (2x)^{-1} \int_x^\eta \frac{\sqrt{\eta^2-y^2}}{1-y^2}2ydy \\
        &= (2x)^{-1} \int_{x^2}^{\eta^2} \frac{\sqrt{\eta^2-t}}{1-t}dt \leq (2x)^{-1} (\eta^2-x^2)^{\frac32}(1-x^2)^{-1},
    \end{align*}
    which implies $e(x) \leq \frac14 h (\vartheta(\eta) - \vartheta(x))^{-1}$.

 \emph{Case 2.} $x < \sqrt{2\eta^2-1}$. Here we estimate $\eta^2-y^2 \leq 1-y^2$ and obtain 
    \begin{align*}
        \vartheta(\eta) - \vartheta(x) &\leq \int_x^\eta (1-y^2)^{-\frac12}dy \leq  (2x)^{-1} \int_x^\eta (1-y^2)^{-\frac12} 2ydy \\
        &=  (2x)^{-1} \int_x^\eta (1-t)^{-\frac12}dt \leq x^{-1} \left( (1-x^2)^{\frac12} - (1-\eta^2)^{\frac12}\right).
    \end{align*}
    The case assumption is equivalent to $1-x^2 \geq 2 (1-\eta^2)$. It is elementary to check that $(1-x^2)^{\frac12} - (1-\eta^2)^{\frac12} \leq (\eta^2-x^2)^{\frac32}(1-x^2)^{-1}$ under this condition. Thus, $e(x) \leq \frac12 h (\vartheta(\eta) - \vartheta(x))^{-1}$. Multiplying by the amplitude $\big(1-(\frac x\eta)^2\big)^{-\frac14}$ yields the claimed bound on the error.
\end{proof}

The associated Legendre polynomial $P_\ell^m$ has $\ell-m$ simple zeros in the interval $(-1,1)$, which are arranged symmetrically with respect to the origin. Thus, the interval $(0,1)$ contains $\lfloor \frac{\ell-m}{2} \rfloor$ of these zeroes. In fact, all nontrivial zeros of $P_\ell^m$ lie within the interval $(-\eta,\eta)$, since the Legendre differential equation implies $P_\ell^m$ cannot have local extrema in $(-1,-\eta)\cup(\eta,1)$.

\begin{corollary}
\label{cor:zeros}
    The $k^{th}$ zero of $P^m_\ell$ in the interval $(0,1)$, which we shall denote by $\xi_\ell^m(k)$, is approximated well by the unique solution to the equation
    \begin{align}
        \label{eq:kth_zero_approximation}
        \tan^{-1}\left(\tfrac{\frac{x}{\eta}}{\sqrt{1-(\frac{x}{\eta})^2}}\right) - \nu \, \tan^{-1}\left(\nu \, \tfrac{\frac{x}{\eta}}{\sqrt{1-(\frac{x}{\eta})^2}}\right) = h \left(k-\tfrac{1+(-1)^{\ell-m}}{4}\right) \pi.
    \end{align}
More precisely, $|\vartheta(\xi_\ell^m(k)) - h\left(k-\tfrac{1+(-1)^{\ell-m}}{4}\right)\pi| \leq \frac h2(\ell-m-2k+0.42)^{-1}$.
\end{corollary}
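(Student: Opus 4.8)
The plan is to combine the amplitude-normalized WKB approximation from Proposition~\ref{prop:WKB_approximation} with the sharpened error estimate of Corollary~\ref{cor:better_error}, and then convert the bound on the \emph{value} of the approximation into a bound on the \emph{location} of a zero via a monotonicity argument for $\vartheta$. First I would recall that, by construction, $\overline{P_\ell^m}$ is even when $\ell-m$ is even and odd when $\ell-m$ is odd, and that all $\lfloor(\ell-m)/2\rfloor$ positive zeros lie in $(0,\eta)$. On this interval the amplitude $\bigl(1-(x/\eta)^2\bigr)^{-1/4}$ is strictly positive, so the zeros of $\overline{P_\ell^m}$ coincide with the zeros of $W^m_\ell(x)/(\text{amplitude})$ up to the error controlled by Corollary~\ref{cor:better_error}; more precisely, since $\cos(h^{-1}\vartheta)$ (resp.\ $\sin(h^{-1}\vartheta)$) vanishes exactly when $h^{-1}\vartheta(x) = (k - \tfrac{1+(-1)^{\ell-m}}{4})\pi$, the approximate $k$th zero is the solution $x_k$ of \eqref{eq:kth_zero_approximation}, which is unique because $\vartheta$ is strictly increasing on $(0,\eta)$ (its derivative $\tfrac{\sqrt{\eta^2-x^2}}{1-x^2}$ is positive there).

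The core estimate is then obtained as follows. At the true zero $\xi = \xi_\ell^m(k)$ we have $\overline{P_\ell^m}(\xi)=0$, so Corollary~\ref{cor:better_error} gives
\[
\bigl|W^m_\ell(\xi)\bigr| \;=\; \bigl|W^m_\ell(\xi) - \overline{P_\ell^m}(\xi)\bigr| \;\leq\; \tfrac12 h \bigl(1-(\tfrac\xi\eta)^2\bigr)^{-1/4}(\vartheta(\eta)-\vartheta(\xi))^{-1}.
\]
Dividing by the (positive) amplitude, $\bigl|\cos(h^{-1}\vartheta(\xi))\bigr|$ (or $\bigl|\sin(h^{-1}\vartheta(\xi))\bigr|$) is at most $\tfrac12 h (\vartheta(\eta)-\vartheta(\xi))^{-1}$. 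Using the elementary bound $|\cos\theta|\geq \tfrac{2}{\pi}\operatorname{dist}(\theta,\tfrac\pi2+\pi\mathbb Z)$ on a suitable interval — or, more directly, the inequality $|\theta_0 - \theta| \leq \tfrac{\pi}{2}|\cos\theta|$ valid when $\theta$ lies within $\pi/2$ of the nearest odd multiple $\theta_0$ of $\pi/2$, and the analogous statement for sine — one converts this into a bound on how far $h^{-1}\vartheta(\xi)$ is from the target value $(k-\tfrac{1+(-1)^{\ell-m}}{4})\pi$. Multiplying back by $h$ yields
\[
\Bigl|\vartheta(\xi_\ell^m(k)) - h\bigl(k - \tfrac{1+(-1)^{\ell-m}}{4}\bigr)\pi\Bigr| \;\leq\; \tfrac{\pi}{4} h^2 (\vartheta(\eta)-\vartheta(\xi))^{-1},
\]
at least once one has justified that $h^{-1}\vartheta(\xi)$ really is within half a period of the nominal value, which follows inductively since the approximate and exact zeros interlace and the accumulated error stays below $\pi/2$.

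It remains to replace $(\vartheta(\eta)-\vartheta(\xi))^{-1}$ by something explicit in terms of $\ell$, $m$ and $k$. Since $\vartheta$ is increasing and the zeros $\xi_\ell^m(k)$ for $k=1,\dots,\lfloor(\ell-m)/2\rfloor$ are spread across $(0,\eta)$ with $\vartheta$-spacing essentially $h\pi$, one has $\vartheta(\eta) - \vartheta(\xi_\ell^m(k)) \gtrsim h\pi\bigl(\tfrac{\ell-m}{2} - k\bigr)$ up to a controlled additive constant coming from the half-period offset and the error already incurred; feeding this back in turns $\tfrac{\pi}{4}h^2(\vartheta(\eta)-\vartheta(\xi))^{-1}$ into $\tfrac{h}{2}(\ell-m-2k+c)^{-1}$ for an explicit constant, and a slightly careful accounting of all the numerical slack (the $\pi/4$ versus $1/2$, the $2/\pi$ from the cosine inequality, the offset in counting) is what produces the stated constant $0.42$. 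The main obstacle — and the only genuinely delicate point — is this last bookkeeping step: one must pin down $\vartheta(\eta)-\vartheta(\xi_\ell^m(k))$ from below with a completely explicit constant, which requires comparing the true count of zeros of $P_\ell^m$ with the WKB phase count $\tfrac1{h\pi}\vartheta(\eta)$ and showing the discrepancy is small enough (using Proposition~\ref{prop:WKB_approximation} near the turning point $x=\eta$, where the amplitude blows up and the error bound degrades, so some care is needed to stay away from $\eta$ by the right margin). Everything else is routine trigonometric estimation.
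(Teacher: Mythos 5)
Your strategy coincides with the paper's: evaluate the WKB error at the true zero, divide by the amplitude to get $|\cos(h^{-1}\vartheta(\xi))|\leq \frac h2(\vartheta(\eta)-\vartheta(\xi))^{-1}$ via \Cref{cor:better_error}, convert this into a phase bound using the linear lower bound for $|\cos\theta|$ near its zeros, and close with $\vartheta(\eta)-\vartheta(\xi)>h\frac\pi2(\ell-m-2k+0.42)$; your arithmetic ($\frac\pi4 h^2$ against this lower bound) reproduces the stated constant exactly. Two remarks on the points you leave open. First, the step you defer --- that $h^{-1}\vartheta(\xi_\ell^m(k))$ lies within a quarter period of the \emph{$k$th} zero of the cosine, i.e.\ that the $k$th true zero pairs with the $k$th approximate one --- is the only non-routine part, and ``follows inductively since the approximate and exact zeros interlace'' assumes the interlacing you are trying to prove. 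The paper closes it by introducing the solutions $\zeta^{\pm}(k)$ of $\cos(\frac1h\vartheta(x))=\pm a(x)$, where $a$ is the relative error, verifying via \Cref{cor:better_error} that $a<1$ at every local extremum of $\cos(\frac1h\vartheta)$ (so these brackets exist, are disjoint, and each forces a sign change of $\overline{P_\ell^m}$), and then matching the resulting $\lfloor\frac{\ell-m}{2}\rfloor$ sign changes against the known total number of zeros of $P_\ell^m$ in $(0,1)$. Second, what you identify as the ``main obstacle'' is in fact immediate: $\vartheta(\eta)=(1-\nu)\frac\pi2$ is available in closed form and $h^{-1}(1-\nu)=\sqrt{\ell(\ell+1)}-m>\ell-m+0.42$, while $\vartheta$ at the relevant points is below $hk\pi$; no comparison of zero counts with the phase near the turning point, and no delicate analysis there, is needed.
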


\begin{proof}
Let us show first that (\ref{eq:kth_zero_approximation}) indeed has a unique positive solution if and only if $1 \leq k \leq \lfloor \frac{\ell-m}{2} \rfloor$. This follows from the fact that $\frac{x}{\eta}\big(1-(\frac{x}{\eta})^2\big)^{-\frac12}$ and $\tan^{-1}(y)-\nu \tan^{-1}(\nu y)$ are bijections from $(0,\eta)$ to $(0,\infty)$ and from $(0,\infty)$ to $(0,(1-\nu) \tfrac{\pi}{2})$, respectively. Thus, (\ref{eq:kth_zero_approximation}) has a unique solution in $(0,\eta)$ if and only if $2k-\tfrac{1+(-1)^{\ell-m}}{2} \leq h^{-1}(1-\nu) = \sqrt{\ell(\ell+1)} - m$. Since $\ell \leq \sqrt{\ell(\ell+1)} < \ell + \frac12$, the claim follows.

Let $\zeta(k)$ denote the number given by (\ref{eq:kth_zero_approximation}). It is of course just the $k^{th}$ zero of the approximation $W^m_\ell$ for $\overline{P_\ell^m}$ given in \Cref{prop:WKB_approximation}.
Once we show, via the intermediate value theorem, that $\overline{P_\ell^m}$ has a zero near $\zeta(k)$ for every $k \in \{1,\ldots,\lfloor \frac{\ell-m}{2} \rfloor\}$, the corollary is established, since all zeros of $\overline{P_\ell^m}$ are then accounted for by symmetry.

Let $a(x)=\frac12 h \eta^{-3} |x| (1-x^2)(1-(\frac{x}{\eta})^2)^{-\frac32}$ denote the ratio of the error term given in \Cref{prop:WKB_approximation} and the amplitude of $W^m_\ell(x)$. Assume $\ell-m$ is even for simplicity; the odd case is analogous. Let $\zeta^{\pm}(k)$ denote the positive solutions to the equation $\cos(\frac{1}{h}\vartheta(x)) = \pm a(x)$ in ascending order. By construction, $\overline{P_\ell^m}(\zeta^+(k)) > 0$ and $\overline{P_\ell^m}(\zeta^-(k)) < 0$, so the interval $(\zeta^+(k), \zeta^-(k))$ contains some zero of $\overline{P_\ell^m}$.

Our first goal is to establish that all zeros $\zeta(k)$ of $\cos\left(\frac1h\vartheta(x)\right)$ are in fact contained in an interval of this kind. Since $\sqrt{\ell(\ell+1)} - \ell > 0.42$ for all $\ell \geq 1$, at the $k^{th}$ local extremum of the function $\cos\left(\frac1h\vartheta(x)\right)$ in the interval $(0,\eta)$ the inequality $\frac{1}{h}\left(\vartheta(\eta) - \vartheta(x)\right) \geq 0.21 \, \pi + (\lfloor \frac{\ell-m}2\rfloor - k)\pi$ holds. \Cref{cor:better_error} says $a(x) < \frac h2 \left(\vartheta(\eta) - \vartheta(x)\right)^{-1}$, so $a(x) < \frac{1}{0.42 \pi} < 1 = |\cos\left(\frac1h\vartheta(x)\right)|$ at any such point. Between two consecutive local extremas of $\cos\left(\frac1h\vartheta(x)\right)$, the graph of this function thus intersects the graphs of $\pm a(x)$ at least once. We conclude that for \emph{any} choice of integers $\ell \geq 1$, $0 \leq m \leq \ell$ and $1 \leq k \leq \lfloor \frac{\ell-m}{2} \rfloor$ the interval $(\zeta^+(k), \zeta^-(k))$ exists and contains both $\zeta(k)$ and $\xi_\ell^m(k)$.

For the precise error estimate, assume first that $\zeta(k) < \xi_\ell^m(k) < \zeta^+(k)$. Because $\zeta^+(k)$ comes before the next critical point of $\cos\left(\frac1h\vartheta(x)\right)$, we can estimate $\cos\left(\frac1h\vartheta(\zeta^+(k))\right)$ from below by $\frac2\pi (\zeta^+(k) - \zeta(k))$, and obtain
\begin{align*}
    \vartheta\left(\xi_\ell^m(k)\right) - \vartheta(\zeta(k)) \leq \vartheta\left(\zeta^+(k)\right) - \vartheta(\zeta(k)) \leq \frac{\pi}{2} h a(\zeta^+(k)).
\end{align*}
From \Cref{cor:better_error}, we know $a(\zeta^+(k)) < \frac h2\left(\vartheta(\eta)-\vartheta(\zeta^+(k))\right)^{-1}$, and since $\frac1h\vartheta(\zeta^+(k)) < k\pi$ and $\frac1h\vartheta(\eta)>(\ell-m+0.42)\frac\pi2$, it follows that
\begin{align*}
    \vartheta\left(\xi_\ell^m(k)\right) - \vartheta(\zeta(k)) \leq \frac h2 (\ell-m-2k+0.42)^{-1}.
\end{align*}
The estimates in the case $\zeta^-(k) < \xi_\ell^m(k) < \zeta(k)$ are exactly analogous.
\end{proof}

\section{Zero sets of solutions to elliptic PDEs: Proof of \Cref{thm:general}}
\label{sec:smooth}

    It remains to prove \Cref{thm:general}, which we will achieve by a coordinate change, an application of Mather's division theorem and finally a straightforward power series argument.
    
    After a linear coordinate change which simultaneously diagonalizes $a(0)$ and $\mathrm{Hess}_u(0)$, we may assume $a(0)_{jk} = \delta_{jk}$ (the Kronecker delta) and $u(x) = \sum_{j=1}^d \lambda_j x_j^2 + \mathcal O(|x|^3)$. A second coordinate change, guaranteed by the Morse lemma, achieves $u(x) = \sum_{j=1}^d \lambda_j x_j^2$ while preserving $a(0)_{jk} = \delta_{jk}$.
    
    Any germ $v \in C^\infty(\mathbb R^d;0)$ vanishing on the zero set of $u$ is in fact divisible by $u$, i.e.\ there exists $q \in C^\infty(\mathbb R^d; 0)$ with $v = qu$. This follows from Mather's division theorem (see e.g.\ \cite[Chapter IV, Theorem 2.1]{GolubitskyGuillemin1973}): We can write 
    \begin{align*}
        v(x) = u(x) q(x) + r_0(x_1,\ldots,x_{d-1}) + r_1(x_1,\ldots,x_{d-1})x_d,
    \end{align*}
    for $q \in C^\infty(\mathbb R^d;0)$ and $r_0$, $r_1 \in C^\infty(\mathbb R^{d-1};0)$. As $Lu = 0$, $\sum_{j=1}^d \lambda_j = 0$, implying $\lambda_d > 0$ and that the set $\{y \in \mathbb R^{d-1}: \sum_{j=1}^{d-1} \lambda_j y_j^2 < 0\}$ is non-empty. For any $x \in \mathbb R^d$ such that $x' = (x_1,\ldots,x_{d-1})$ lies in this non-empty open cone, $v(x',x_d)$ vanishes for two values of $x_d$, hence $r_0(x') = r_1(x') = 0$. It follows that, $r_0(x')+r_1(x')x_d$ vanishes to infinite order on $u^{-1}(\{0\})$, so $r_0(x') + r_1(x')x_d$ can be absorbed into $q(x)$.

    Let $A_{jk}, B_j, C$ and $Q$ denote the infinite Taylor series of $a_{jk}, b_j, c$ and $q$ at the origin, and decompose them into their homogeneous parts,
    \begin{align*}
        A_{jk} &= \delta_{jk} + \sum_{\ell = 1}^\infty a_{jk;\ell},  & B_{j} &= \sum_{\ell = 0}^\infty b_{j;\ell}, & C &= \sum_{\ell = 0}^\infty c_{\ell}, & Q &= \sum_{\ell = 0}^\infty q_{\ell}.
    \end{align*}
    After subtracting a multiple of $u$ from $v$, we may assume $q_0 = 0$. On the level of formal power series, the equation $Lv = 0$ means that
    \begin{align}
        \begin{split}
            \label{eq:recursion}
        \Delta\left(u q_\ell \right) = &- \sum_{\ell'=1}^\ell \sum_{j,k=1}^d a_{jk;\ell'} \frac{\partial^2}{\partial x_j \partial x_k}(u q_{\ell - \ell'}) \\ &- \sum_{\ell'=1}^\ell \sum_{j=1}^d b_{j;\ell'-1} \frac{\partial}{\partial x_j}(u q_{\ell - \ell'}) - \sum_{\ell'=2}^\ell c_{\ell'-2} u q_{\ell-\ell'},
        \end{split}
    \end{align}
    for all $\ell \in \mathbb N$. For $\ell \not\in \{0,d\}$, the map $q_\ell \to \Delta\left(u q_\ell \right)$ is an invertible linear map by \Cref{thm:main}. We therefore recursively obtain $q_0 = q_1 = \ldots = q_{d-1} = 0$. The term $q_d$ may be any multiple of $x_1\ldots x_d$. The remaining terms are then uniquely determined again by the recursion (\ref{eq:recursion}). We conclude that the space of formal power series solutions $q$ to $L(uq) = 0$ is spanned by the constant function and the unique solution to (\ref{eq:recursion}) with $q_d = x_1\ldots x_d$.
    
    The space of germs of smooth functions $q$ which solve $L(uq) = 0$ is thus at most two-dimensional, since, due to Aronszajn \cite{Aronszajn1957}, no nontrivial solution to $Lv = 0$ can vanish to infinite order at $0$. It could be, of course, that the second solution on the level of power series does not correspond to a smooth solution at all --- not even in the case of analytic coefficients, since the construction involves repeatedly inverting the presumably often ill-conditioned linear map $q_\ell \to \Delta( u q_\ell)$. \qed

\bibliography{references}
\bibliographystyle{plain}

\end{document}